\documentclass{amsart}
\usepackage{amsmath,amsthm}
\usepackage{amsfonts}
\usepackage{amssymb}
\usepackage{color}
\usepackage{graphicx}
\usepackage{subcaption}
\usepackage[comma]{natbib}
\usepackage{url}

\theoremstyle{plain}
\newtheorem{theorem}{Theorem}[section]
\newtheorem{corollary}[theorem]{Corollary}
\newtheorem{proposition}[theorem]{Proposition}
\newtheorem{lemma}[theorem]{Lemma}

\theoremstyle{remark}
\newtheorem{remark}[theorem]{Remark}

\newcommand{\N}{\mathbb{N}}

\title[Multiple-merger coalescents and population size changes]{Cannings models, population size changes and multiple-merger coalescents}
\author{Fabian Freund}
\address{Crop Plant Biodiversity and Breeding Informatics Group (350b), Institute of Plant Breeding, Seed Science and Population Genetics,   University of Hohenheim, Fruwirthstrasse 21, 70599 Stuttgart, Germany}

 \email{fabian.freund@uni-hohenheim.de}
\thanks{Many thanks to G. Achaz and S. Matuszewski for initiating this article by searching for a Cannings model leading to the Beta coalescent with exponential growth, as well as for helpful comments. I want to further thank two anonymous referees for constructive suggestions that improved the readability and correctness of the manuscript, and for pointing out Lemma 5. I was funded by DFG grant FR 3633/2-1
through Priority Program 1590: Probabilistic Structures in Evolution. }
\keywords{$\Lambda$-coalescent, Cannings models, population size, Moran model}
\subjclass[2010]{92D25,60J27}
\begin{document}

\begin{abstract}
Multiple-merger coalescents, e.g. $\Lambda$-$n$-coalescents, have been proposed as models of the genealogy of $n$ sampled individuals for a range of populations whose genealogical structures are not captured well by Kingman's $n$-coalescent. $\Lambda$-$n$-coalescents can be seen as the limit process of the discrete genealogies of Cannings models with fixed population size, when time is rescaled and population size $N\to\infty$. As established for Kingman's $n$-coalescent, moderate population size fluctuations in the discrete population model should be reflected by a time-change of the limit coalescent. For $\Lambda$-$n$-coalescents, this has been explicitly shown for only a limited subclass of $\Lambda$-$n$-coalescents and  exponentially growing populations. This article gives a general construction of time-changed $\Lambda$-$n$-coalescents as limits of specific Cannings models with rather arbitrary time changes.
\end{abstract}

\maketitle

\section{Introduction}\label{sec:intro}
The genealogies of samples from populations with highly variant offspring numbers, for instance due to sweepstake reproduction or rapid selection, are not well modelled by Kingman's $n$-coalescent. As a more realistic alternative, multiple-merger coalescents, especially $\Lambda$-coalescents have been proposed, as reviewed in \cite{Tellier2014}, \cite{irwin2016importance} and \cite{eldon2016current}. $\Lambda$-$n$-coalescents, introduced by \cite{Pitman1999,Sagitov1999,donnelly1999particle}, are Markovian processes $(\Pi_t)_{t\geq 0}$, which describe the genealogy of a set of individuals $\{1,\ldots,n\}$. This is done by representing the ancestral lineages present at time $t$ of these individuals by the sets of offspring of each ancestral lineage in the sample. Thus, $(\Pi_t)_{t\geq 0}$ can be defined as a random process with states in the set of partitions of $\{1,\ldots,n\}$ and transitions via merging of blocks (i.e. merging of ancestral lineages to a common ancestor). For a $\Lambda$-$n$-coalescent, the infinitesimal rates of any merger of $k$ of $b$ present lineages is given by $\lambda_{b,k}:=\int_0^1 x^{k-2}(1-x)^{b-k}\Lambda(dx)$, where $\Lambda$ is a finite measure on $[0,1]$. This includes Kingman's $n$-coalescent if $\Lambda$ is the Dirac measure in 0.\\ As in the case of Kingman's $n$-coalescent being the limit genealogy from samples taken from a discrete Wright-Fisher or Moran model, $\Lambda$-$n$-coalescents can be constructed as the (weak) limit of genealogies from samples of size $n$ taken from Cannings models. The limit is reached as population size $N$ goes to infinity and time is rescaled, see \cite{Moehle2001}. Time is rescaled by using $[c^{-1}_N]$ generations in the discrete model as one unit of evolutionary (coalescent) time in the limit, where $c_N$ is the probability that two individuals picked in a generation have the same parent one generation before. In the discrete models, the population size $N$ is fixed across all generations.\\
Only populations in an equilibrium state are described well by models with fixed population sizes. This idealized condition often does not apply to natural populations. In particular, due to fluctuating environmental conditions population sizes are expected to fluctuate likewise. Two standard models of population size changes are timespans of exponential growth or decline, as well as population bottlenecks, where population size drops to a fixed size smaller than $N$ for a timespan on the evolutionary (coalescent) timescale. Such changes are featured in coalescent simulators as \texttt{ms} \citep{Hudson2002} or \texttt{msprime} \citep{msprime}. The latter changes are also the model of population size changes in \texttt{PSMC} \citep{li2011inference} or similar approaches as \texttt{SMC++} \citep{terhorst2017robust}.  For the Wright-Fisher model, which converges to Kingman's $n$-coalescent if population size $N$ is fixed for all generations, the same scaling $c_N^{-1}$ from discrete genealogy to limit is valid for population size changes which maintain a population size of order $N$ at all times, see \cite{griffiths1994sampling} or \cite{Kaj2003}. The resulting limit process is Kingman's $n$-coalescent, whose timescale is (non-linearly) transformed. However, size changes too extreme can yield a non-bifurcating (multiple merger) genealogy, see \citet[Sect. 6.1]{Birkner2009}.\\ 
For $\Lambda$-$n$-coalescents, the link between fluctuating population sizes in the discrete models and the time-change in the coalescent limit is somewhat less established. While conditions for convergence of the discrete genealogies to a limit process are given in \cite{mohle2002coalescent}, no explicit construction of haploid Cannings models leading to an analogous limit, a $\Lambda$-$n$-coalescent with changed time scale, is given. For a specific case, the Dirac $n$-coalescent for an exponentially growing population, such a construction has been given in \cite{Matuszewski2017}, based on the fixed-$N$ Cannings model (modified Moran model) from \cite{Eldon2006}. However, also other $\Lambda$-$n$-coalescents (or Cannings models which should converge to these) with changed time scale have been recently discussed and applied as models of genealogies, see \cite{Spence1549}, \cite{Kato171060}, \cite{alter2016population} and \cite{hoscheit2018multifurcating}. This leads to the goal of this article, which is to extend the approach in \cite{Matuszewski2017} to explicitly give a construction of time-changed $\Lambda$-coalescents as limits of Cannings models with fluctuating population sizes. 
The Cannings models used are modified Moran models, see e.g. \cite{huillet2013extended}, and the Cannings models introduced in \cite{Schweinsberg2003}. The main tool to establish the convergence to the time-changed $\Lambda$-$n$-coalescent is, as in \cite{Matuszewski2017}, applying \citet[Thm. 2.2]{mohle2002coalescent}.\\
For diploid Cannings models, the umbrella model  from \cite{koskela2018robust} gives a general framework to add population size changes, selection, recombination and population structure to the fixed-$N$-model. There, if one only considers population size changes, the limit is a time-changed $\Xi$-$n$-coalescent, a coalescent process with simultaneous multiple mergers. The focus in the present paper is slightly different though, the aim is to explicitly construct Cannings models that converge, after linear time scaling, to a time-changed $\Lambda$-$n$-coalescent, while \cite{koskela2018robust} concentrates on the convergence itself.           

\section{Models and main results}
Cannings models \citep{cannings1974latent,Cannings1975} describe the probabilistic structure of the pedigree (offspring-parent relations) of a finite population in generations $v\in\mathbb{Z}=\{\ldots,2,-1,0,1,2\ldots\}$ with integer-valued population sizes $(N_v)_{v\in\mathbb{Z}}$. The $N_v$ individuals in generation $v$ produce $(\nu^{(v)}_1,\ldots,\nu^{(v)}_{N_v})$ offspring, where $\sum_{i=1}^{N_v}\nu^{(v)}_i=N_{v+1}$ and offspring sizes are exchangeable, i.e. $(\nu^{(v)}_1,\ldots,\nu^{(v)}_{N_v})\stackrel{d}{=}(\nu^{(v)}_{\sigma(1)},\ldots,\nu^{(v)}_{\sigma(N_v)})$ for any permutation $\sigma\in S_{N_v}$. The offspring generation $v+1$ then consists of these individuals in arbitrary order (independent of the parents). The case $N_v=N$ for all $v$ is denoted as the fixed-$N$ case.\\From now on, look at the genealogy of the population in generation 0. For convenience, denote the generations in reverse order by $r=-v$, i.e. if one looks $i$ generations back, this is denoted by $r=i$. The population sizes $N_r$ are defined relative to a reference size $N$, in a way that if $N\to\infty$, also $N_r\to\infty$. From now on, use $N=N_0$. The goal is to establish a limit process of the discrete genealogies for $N\to\infty$. The discrete genealogy of a sample of size $n$ in generation 0 is a random process $(\mathcal{R}^{(N)}_r)_{r\in\mathbb{N}_0}$ with values in the partitions of $\{1,\ldots,n\}$, where $i,j$ are in the same block of $\mathcal{R}^{(N)}_r$ iff they share the same ancestor in generation $r$.\\ 
The terminology from \cite{mohle2002coalescent} is used with slight adaptations. Let $c_{N,r}$ be the probability that two arbitrary individuals in generation $r-1$ have the same ancestor in generation $r$ in the model with reference population size $N$. To clarify, $c_{N,r}$ is the coalescence probability for individuals in generation $r-1$  if population sizes are variable, while $c_N$ denotes the coalescence probability in the fixed$-N$ case. 
Define $F_{N}(s)=\sum_{r=1}^{\left[s\right]}c_{N,r}$ and let 
\begin{equation}\label{eq:def_mathcalG}
\mathcal{G}^{-1}_{N}(t)=\inf\left\lbrace s>0: F_N(s)>t\right\rbrace -1
\end{equation}
be its shifted pseudo-inverse. For $l$ and $a_1,\ldots,a_l\geq 1$, set 
\begin{equation}\label{eq:cann_transprob}
\Phi^{(N)}_l(r;a_1,\ldots,a_l)=\frac{(N_r)_lE\left(\prod_{i=1}^l (\nu_i^{(r)})_{a_i}\right)}{(N_{r-1})_{\sum^l_i a_i}}
\end{equation} 
as the probability that in generation $r-1$, from $\sum_{i=1}^l a_i$ individuals sampled from the Cannings model, specific sets of $a_1\geq\cdots\geq a_l$ individuals each find a common ancestor one generation before (generation $r$), where ancestors of different sets are different. For $l=1,a_1=2$, $c_{N,r}=\Phi^{(N)}_1(r;2)$ See \cite{moehle_robust1998} for details.\\  
Consider a sequence of fixed-$N$ Cannings models for each $N\to\infty$ with $c_N\to 0$ for $N\to\infty$ and transition probabilities $\Phi^{(N)}_l(a_1,\ldots,a_l)$ for a merger of $a_1,\ldots,a_l\geq 2$ individuals , converging to a $\Lambda$-$n$-coalescent $(\Pi_t)_{t\geq 0}$ with infinitesimal transition rates $\phi_l(a_1,\ldots,a_l):=\lambda_{a_1,a_1}1_{\{l=1\}}$ when scaled by $c_N^{-1}$, i.e.
\begin{equation}\label{eq:fixedNcnvmmc}
(\mathcal{R}^{(N)}_{[c_N^{-1}t]}\stackrel{d}{\to} (\Pi_t)_{t\geq 0}
\end{equation} 
in the Skorohod sense for $N\to\infty$. Eq. \ref{eq:fixedNcnvmmc} is satisfied if $$c_N\to 0 \mbox{ and } c_N^{-1}\Phi^{(N)}_l(a_1,\ldots,a_l)\to \phi_l(a_1,\ldots,a_l)$$ for $N\to\infty$, see \cite[Thm. 2.1]{Moehle2001}.
We will establish a  variant of \citet[Corollary 2.4]{mohle2002coalescent} to show convergence of a variety of Cannings models with variable population sizes to (time-changed) $\Lambda$-$n$-coalescents. For this, we need some assumptions. Most importantly, an asymptotically infinite sum needs to be controlled. For this, we introduce a stronger concept of $o$-terms: For a null sequence $(x_N)_{N\in\N}$, a sequence of terms is $o_{\sum}(x_N)$ if summing $O(x^{-1}_N)$ of them still vanishes, e.g. if these summands divided by $x_N$ have the same null sequence majorant.\\
Fix $t>0$. We assume for all $0\leq r \leq [c_{N}^{-1}t]$:
\begin{flushleft}
\begin{itemize}
\item Population size changes of order $N$ leading to a well-defined population size profile in coalescent time, i.e.
\begin{equation}\label{eq:cond_popsize}
\begin{split}
0<N^{-}(t)&:=c_1(t)N  \leq N_r \leq c_2(t)N=:N^{+}(t)<\infty\\ N^{-1} & N_{\lfloor tc_N^{-1}\rfloor}\to\nu(t) \mbox{ for } N\to\infty
\end{split}
\end{equation} 
for positive and finite functions $c_1,c_2,\nu:\mathbb{R}_{\geq 0}\to\mathbb{R}_{>0}$.
 \item
\begin{equation}\label{eq:cond_phi}
\Phi^{(N)}_l(r;a_1,\ldots,a_l)=\Phi^{(N_r)}_l(a_1,\ldots,a_l)+o_{\sum}(c_{N_r})
\end{equation}

\end{itemize}
\end{flushleft}

The first class of Cannings models used to construct time-changed $\Lambda$-$n$-coalescents are modified Moran models. In a modified Moran model, only a single individual has more than one offspring (and may have many offspring). Following \cite{huillet2013extended}, define the modified (haploid) Moran model with fixed population size $N$. Let $(U_N(z))_{z\in\mathbb{N}}$ be i.i.d. random variables with values in $\{2,\ldots,N\}$, let $U_N$ be a r.v. with their common distribution. In each generation $z\in\mathbb{Z}$,
\begin{itemize}
\item One randomly chosen individual has $U_N(z)$ offspring,
\item $U_N(z)-1$ randomly chosen individuals have no offspring,
\item The other $N-U_N(z)$ have one offspring each,
\end{itemize} 
Specific modified Moran models leading to Dirac $n$-coalescents as genealogy limits have been introduced as population models with skewed offspring distributions, see \cite{Eldon2006} and \cite{Matuszewski2017}, for fixed and variable population sizes.\\  For any $\Lambda$-$n$-coalescent with $\Lambda([0,1])=1$ (denoted by $\Lambda\in\mathcal{M}[0,1]$), \citet[Prop. 3.4]{huillet2013extended} shows that there always exist fixed-$N$ modified Moran models such that their rescaled genealogies converge to the $\Lambda$-$n$-coalescent. These can be constructed via a random variable $U'_N$, that is distributed like the merger size of the first merger in a $\Lambda$-$N$-coalescent. As shown in \citep[Eq. 9]{huillet2013extended}, this means
\begin{equation}\label{eq:LambdamodMoran}
P(U'_N=j)=\lambda_N^{-1} \binom{N}{j}E(X^{j-2}(1-X)^{N-j}),\ j\geq 2,
\end{equation}
where $\lambda_N$ is the total transition rate of the $\Lambda$-$N$-coalescent and $X$ has distribution $\Lambda$. \\ 
To add population size changes from generation to generation to the modified Moran model, the relationship between offspring and parent generation needs to be defined. This will be done by adjusting the fixed-$N$ model: If in generation $r$, there are $N_r$ individuals, first run a fixed-$N_r$ modified Moran model, producing $N_r$ (potential) offspring. Let $U_{n,r}$ denote the number of offspring in generation $r-1$ of the multiplying parent in generation $r$.\\ 
If population size declines from generation $r$ to $r-1$, sample $N_{r-1}$ individuals randomly (without replacement) from the $N_{r}$ potential offspring consisting of $U_{N_r}$ offspring of the multiplying parent and $N_{r}-U_{N_r}$ single offspring. If individuals are added to the population (population growth), i.e. \label{p:pedigree}
\begin{equation}\label{eq:def_popsizechange}
d_{N,r}=N_{r-1}-N_r>0,
\end{equation} many individuals, 
to still end up with a modified Moran model one has two options. Additional individuals can be added as further offspring of the already multiplying parent. 
A second option is to add individuals as offspring of the non-reproducing individuals from generation $r$ in the fixed-$N_r$ model. Each originally non-reproducing parent can have one offspring, so this allows to add $U_{N_r}-1\geq 1$ individuals. The number of additional individuals can be divided between these two options, let $A_{n,r}$ denote the individuals added as offspring to the multiplying parent (which means that $d_{N,r}-A_{N,r}$ individuals are added as offspring of non-reproducing parents from the fixed-$N_r$ model). Expressed differently, there are $N_{r-1}=N_r+d_{N,r}$ offspring from which $U_{n,r}=U_{N_r}+A_{N,r}$ share the same parent, while all other offspring are single offspring of other parents (who all differ). See Figure \ref{fig:modMoran} for an example.
\begin{figure}[ht]
\includegraphics[scale=.5]{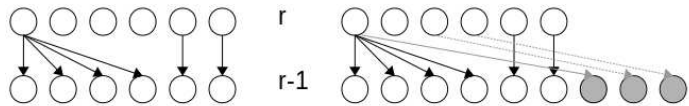}
\caption{Example of allocation of individuals when population size is increasing. Left: Start with a fixed size Moran model with $U_{6}=4$. Right: Population increases by $d_{6,r}=3$, from which $A_{n,r}=1$ individual is allocated to the multiplying parent from generation $r$ in the fixed size model (and 2 to non-reproducing individuals from the fixed size model in generation $r$). This results in $U_{6,r}=5$}
\label{fig:modMoran}
\end{figure}

 While some care has to be taken to not change coalescence probabilities (see Remark \ref{rem:se} for an example), there will be different possibilities to choose $A_{N,r}$. For Dirac-$n$-coalescents with exponential growth (on the coalescent time scale), \cite{Matuszewski2017} used $A_{n,r}=d_{N,r}$. A reasonable approach may also be to set $A_{n,r}$ (close to) proportional to the fraction $U_{N_r}/N_r$ of offspring coming from the multiplying parent: Each of the $d_{N,r}$ added individuals are added to the multiplying parent with probability $U_{N_r}/N_r$ (with the obvious constraint that after $U_{N_r}-1$ individuals are added as offspring of non-reproducing parents, all further individuals need to be added to the multiplying parent). As for the fixed-size models, we consider the genealogy of a sample of $n$ individuals, which is denoted by $(\tilde{\mathcal{R}}^{(N)}_r)_{r\in\N_0}$\\
The main results of the present paper show that the two allocation schemes allow to construct $\Lambda$-$n$-coalescent limits of the genealogies of these modified Moran models if population sizes vary in the discrete models in ways described by Eq. \eqref{eq:cond_popsize}.   
\begin{theorem}\label{thm:modMoran2tcMMCthinned}
Let $\Lambda\in\mathcal{M}[0,1]$ so that $U'_N$ defined by Eq. \ref{eq:LambdamodMoran} satisfies $$E((U'_N)_2)(N-1)^{-1}\nrightarrow 0 \mbox{ for } N\to\infty.$$ Define a modified Moran model for fixed $N$ by $$U_N:=U'_N1_{A_N}+2(1-1_{A_N})$$ for sets $A_N$ s.t. $U'_N$,$1_{A_N}$ are independent and $E((U'_N)_2)P(A_N)((N)_2)^{-1}=N^{-\gamma}$ for $1<\gamma<2$. Let $\nu:\mathbb{R}_{\geq 0}\to \mathbb{R}_{>0}$ be a positive real function. Then, there exist population sizes satisfying Eq. \eqref{eq:cond_popsize} for $\nu$ so that the genealogies $(\tilde{\mathcal{R}}^{(N)}_r)_{r\in\N_0}$ of the modified Moran model with variable population sizes converge $$(\tilde{\mathcal{R}}^{(N)}_{[c_N^{-1} t]})_{t\geq 0}\stackrel{d}{\to}(\Pi_{\mathcal{G}(t)})_{t\geq 0}$$ in the Skorohod-sense, where $\mathcal{G}(t)=\int^t_0 (\nu(s))^{-\gamma} ds$ and $(\Pi_t)_{t\geq 0}$ is a $\Lambda$-$n$-coalescent. In the discrete model, additional individuals can be added in any way so that the resulting model is still a modified Moran model.            
\end{theorem}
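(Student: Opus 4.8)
The plan is to deduce Theorem~\ref{thm:modMoran2tcMMCthinned} from the general convergence criterion announced above (the variant of \citet[Cor.\ 2.4]{mohle2002coalescent}): one verifies that the fixed-$N$ modified Moran model with offspring number $U_N$ satisfies~\eqref{eq:fixedNcnvmmc} with limit the $\Lambda$-$n$-coalescent $(\Pi_t)_{t\ge0}$, builds population sizes $(N_r)$ out of $\nu$, checks the standing assumptions~\eqref{eq:cond_popsize} and~\eqref{eq:cond_phi}, and then reads off the time change from the asymptotics of $F_N$.

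First I would settle the fixed-$N$ limit. Since in a modified Moran model only one parent reproduces more than once, $\Phi^{(N)}_l(a_1,\dots,a_l)=0=\phi_l(a_1,\dots,a_l)$ for $l\ge2$, while $\Phi^{(N)}_1(k)=E((U_N)_k)/(N)_k$ for a single $k$-merger, so $c_N=E((U_N)_2)/(N)_2$. Conditioning on $1_{A_N}$ gives $E((U_N)_k)=E((U'_N)_k)P(A_N)$ for $k\ge3$ and $E((U_N)_2)=E((U'_N)_2)P(A_N)+2(1-P(A_N))$, and Vandermonde's identity applied to~\eqref{eq:LambdamodMoran} yields $E((U'_N)_k)=(N)_k\lambda_N^{-1}\int_0^1 x^{k-2}\,\Lambda(dx)$, hence $\lambda_N=(N)_2/E((U'_N)_2)$ and $P(A_N)=\lambda_N N^{-\gamma}$. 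The hypothesis $E((U'_N)_2)(N-1)^{-1}\nrightarrow0$ keeps $\lambda_N$ of order at most $N$, so for $\gamma>1$ the quantity $P(A_N)=\lambda_N N^{-\gamma}$ is a probability tending to $0$, while $\gamma<2$ makes $2(1-P(A_N))/(N)_2=O(N^{-2})$ negligible against $N^{-\gamma}$. Combining these, $c_N=N^{-\gamma}(1+o(1))\to0$ and $c_N^{-1}\Phi^{(N)}_1(k)\to\int_0^1 x^{k-2}\,\Lambda(dx)=\lambda_{k,k}=\phi_1(k)$ for all $k\ge2$, so~\eqref{eq:fixedNcnvmmc} follows from \citet[Thm.\ 2.1]{Moehle2001}.

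Next I would set $N_r:=\lceil N\nu(rc_N)\rceil$; assuming the regularity of $\nu$ implicit in the statement (continuity, bounded variation on compacts), \eqref{eq:cond_popsize} holds with $c_1(t),c_2(t)$ essentially $\inf$ and $\sup$ of $\nu$ on $[0,t]$, and $N^{-1}N_{\lfloor tc_N^{-1}\rfloor}\to\nu(t)$. For~\eqref{eq:cond_phi}, the case $l\ge2$ is trivial, and for $l=1$ I would split into two cases. If the population declines from $r$ to $r-1$, then $U_{n,r}$ is hypergeometric given $U_{N_r}$, and $E((U_{n,r})_k\mid U_{N_r})=(U_{N_r})_k(N_{r-1})_k/(N_r)_k$ gives $\Phi^{(N)}_1(r;k)=\Phi^{(N_r)}_1(k)$ \emph{exactly}, irrespective of the allocation rule. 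If the population grows, then $|d_{N,r}|=|N_{r-1}-N_r|\le1$ for $N$ large (as $|\nu((r-1)c_N)-\nu(rc_N)|=O(c_N)=o(N^{-1})$) and $\sum_{r\le[c_N^{-1}t]}d_{N,r}=O(N)$; writing $U_{n,r}=U_{N_r}+A_{N,r}$ with $A_{N,r}\le d_{N,r}\le1$ and expanding by Vandermonde, $(U_{N_r}+A_{N,r})_k-(U_{N_r})_k=kA_{N,r}(U_{N_r})_{k-1}$, and using $E((U_{N_r})_{k-1})\le E((U'_{N_r})_{k-1})P(A_{N_r})+(2)_{k-1}=O(N^{k-1-\gamma})$ (respectively $E(U_{N_r})=O(N^{2-\gamma})$ when $k=2$) together with the $O(d_{N,r}/N)$ relative change of $(N_{r-1})_k$ versus $(N_r)_k$, one gets $|\Phi^{(N)}_1(r;k)-\Phi^{(N_r)}_1(k)|=O(d_{N,r}N^{-\gamma})$ uniformly in $r$, whose sum over $r\le[c_N^{-1}t]$ is $O(N^{1-\gamma})\to0$, i.e.\ $o_{\sum}(c_{N_r})$. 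This bound uses only $A_{N,r}\le d_{N,r}$, which is the content of the last sentence of the theorem.

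Finally, with~\eqref{eq:fixedNcnvmmc},~\eqref{eq:cond_popsize} and~\eqref{eq:cond_phi} in hand, the announced variant of \citet[Cor.\ 2.4]{mohle2002coalescent} yields $(\tilde{\mathcal{R}}^{(N)}_{[c_N^{-1}t]})_{t\ge0}\stackrel{d}{\to}(\Pi_{\mathcal{G}(t)})_{t\ge0}$ with $\mathcal{G}$ the limit of $F_N(c_N^{-1}\cdot)$, so it only remains to compute that limit. By~\eqref{eq:cond_phi}, $c_{N,r}=\Phi^{(N)}_1(r;2)=c_{N_r}+o_{\sum}(c_{N_r})$, and $c_{N_r}=N_r^{-\gamma}(1+o(1))=c_N\,\nu(rc_N)^{-\gamma}(1+o(1))$ uniformly for $r\le c_N^{-1}t$ since $N_r$ is of order $N$, so
\begin{equation*}
F_N(c_N^{-1}t)=\sum_{r=1}^{[c_N^{-1}t]}c_{N,r}=(1+o(1))\,c_N\sum_{r=1}^{[c_N^{-1}t]}\nu(rc_N)^{-\gamma}+o(1)\ \longrightarrow\ \int_0^t\nu(s)^{-\gamma}\,ds=\mathcal{G}(t),
\end{equation*}
a Riemann sum of mesh $c_N\to0$. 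I expect the hard part to be the uniform $o_{\sum}$-control in~\eqref{eq:cond_phi} for growing populations under an arbitrary allocation rule, i.e.\ keeping the contribution of the rare but potentially large event $A_{N_r}$ negligible once it is summed over the $\Theta(c_N^{-1})$ generations in the window, whereas the fixed-$N$ limit and the Riemann-sum identification of $\mathcal{G}$ are comparatively routine.
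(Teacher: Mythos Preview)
Your approach matches the paper's: verify the fixed-$N$ limit, choose $N_r$ to track $N\nu$, check condition~\eqref{eq:cond_phi} (the paper packages this as Proposition~\ref{lem:3Mconv}, using a Stirling-number expansion of $(U_{N_r}+A_{N,r})_{a_1}$ where you use Vandermonde, which is cleaner since $A_{N,r}\le 1$), and identify $\mathcal{G}$ as a Riemann sum (the paper's Lemma~\ref{lem:shifttc}). The hypergeometric argument for declining populations and the computation $c_N\sim N^{-\gamma}$ are the same.

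There is one gap worth flagging. Your bound $|\Phi^{(N)}_1(r;k)-\Phi^{(N_r)}_1(k)|=O(d_{N,r}N^{-\gamma})$ combined with the claim $\sum_{r\le[c_N^{-1}t]}d_{N,r}=O(N)$ is not quite sufficient as stated. The telescoping sum of \emph{signed} increments is $O(N)$, but you need the positive part, and with your choice $N_r=\lceil N\nu(rc_N)\rceil$ the ceilings can oscillate by $\pm 1$ even though $N\nu(rc_N)$ moves by $o(1)$ per step; in the worst case $\sum_{r:d_{N,r}>0}d_{N,r}=\Theta(N^\gamma)$, giving only an $O(1)$ total. The paper avoids this by showing each error term is $o(c_N)$ \emph{uniformly in $r$}: the key input (checked in the paper's proof of the theorem) is $E(U_N)/E((U_N)_2)\to 0$, which follows from $E((U_N)_2)\sim N^{2-\gamma}\to\infty$ and sharpens your $E(U_{N_r})=O(N^{2-\gamma})$ to $o(N^{2-\gamma})$, turning the $k=2$ cross term into $o(c_N)$ rather than $O(c_N)$. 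A related point: your choice $N_r=\lceil N\nu(rc_N)\rceil$ needs $\nu$ Lipschitz to get $|d_{N,r}|\le 1$; the paper instead observes that since $c_N^{-1}=N^\gamma\gg N$, any $O(N)$-sized change of $\nu$---including jumps---can be spread over enough generations so that $|d_{N,r}|\le 1$ throughout while still satisfying~\eqref{eq:cond_popsize}.
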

For $\Lambda$ not covered by Theorem \ref{thm:modMoran2tcMMCthinned}, one can choose slightly different modified Moran models that converge to a $\Lambda$-$n$-coalescent limit for an arbitrary population size profile on the coalescent time scale. 
\begin{theorem}\label{thm:modMoran2tcMMC}
Fix $\Lambda\in\mathcal{M}[0,1]$ so that $U'_N$ defined by Eq. \eqref{eq:LambdamodMoran} satisfies $$E((U'_N)_2)(N-1)^{-1}\to 0.$$ For fixed population size $N$, define modified Moran models via $U_N=U'_N$. Let $\nu:\mathbb{R}_{\geq 0}\to \mathbb{R}_{>0}$ be a positive function describing the population size profile. Then, there exist population sizes satisfying Eq. \eqref{eq:cond_popsize} for $\nu$ so that the genealogies $(\tilde{\mathcal{R}}^{(N)}_r)_{r\in\N_0}$ of the modified Moran model with variable population sizes fulfill $(\tilde{\mathcal{R}}^{(N)}_{[\mathcal{G}_N^{-1}(t)]})_{t\geq 0}\stackrel{d}{\to}(\Pi_t)_{t\geq 0}$ in the Skorohod-sense, where $(\Pi_t)_{t\geq 0}$ is a $\Lambda$-$n$-coalescent. In the discrete model, additional individuals are added solely as offspring of non-reproducing parents from the fixed-$N_r$ model, unless $E((U_N)_2)\to\infty$ for $N\to\infty$. In that case, they can be added any way that preserves that the model is still a modified Moran model. 
\end{theorem}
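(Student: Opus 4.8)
\noindent\emph{Strategy of proof.}
The plan is to fit Theorem~\ref{thm:modMoran2tcMMC} into the announced variant of \citet[Corollary 2.4]{mohle2002coalescent}: it is enough to realise the variable-size modified Moran model so that, generation by generation, its ancestral transition probabilities $\Phi^{(N)}_l(r;\cdot)$ from Eq.~\eqref{eq:cann_transprob} match those of the \emph{fixed}-$N_r$ modified Moran model up to an $o_{\sum}(c_{N_r})$ error, i.e.\ to verify Eqs.~\eqref{eq:cond_popsize} and \eqref{eq:cond_phi}; the reparametrisation $\mathcal G^{-1}_N$ of Eq.~\eqref{eq:def_mathcalG} then absorbs the varying coalescence speed by itself, since $F_N$ is assembled from the \emph{true} one-step coalescence probabilities $c_{N,r}$. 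The structural input is $U_N=U'_N$, with $U'_N$ as in Eq.~\eqref{eq:LambdamodMoran}: in a modified Moran model at most one parent ever reproduces, so $\Phi^{(N)}_l$ vanishes identically for $l\ge 2$, and the $l=1$ rates are precisely those identified in \citet[Prop. 3.4]{huillet2013extended}; hence the fixed-$N$ models fulfil $c_N\to 0$ and $c_N^{-1}\Phi^{(N)}_l\to\phi_l$, so Eq.~\eqref{eq:fixedNcnvmmc} holds with $(\Pi_t)_{t\ge 0}$ the desired $\Lambda$-$n$-coalescent. I would also record at the outset that the hypothesis $E((U'_N)_2)(N-1)^{-1}\to 0$ is exactly $Nc_N\to 0$, so along a Lipschitz profile the per-generation size change $d_{N,r}$ of Eq.~\eqref{eq:def_popsizechange} is $o(N)$, eventually taking values in $\{-1,0,1\}$.

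Next I would fix the population sizes. Given $\nu$, set $N_r$ to be an integer rounding of $N\nu(rc_N)$ (legitimate for $\nu$ Lipschitz or piecewise monotone; for rougher $\nu$ one updates $N_r$ along a sufficiently sparse subsequence of generations, which does not change the coalescent-time limit). Since $\nu$ is bounded away from $0$ and $\infty$ on $[0,t]$ and the grid $\{rc_N:0\le r\le[c_N^{-1}t]\}$ exhausts $[0,t]$ with mesh $c_N\to 0$, Eq.~\eqref{eq:cond_popsize} holds for this $\nu$. Generation $r$ is then produced by running a fixed-$N_r$ modified Moran model with an independent copy of $U'_{N_r}$ and, if the population declines, subsampling $N_{r-1}$ of the $N_r$ offspring uniformly without replacement, or, if it grows, attaching the $d_{N,r}$ surplus offspring to originally non-reproducing parents --- possible whenever $d_{N,r}\le U'_{N_r}-1$, which for $N$ large holds for every $r$ when $\nu$ is continuous (then $d_{N,r}\to 0$), genuine jumps being treated separately below. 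When $E((U_N)_2)\to\infty$, a typical reproducing family has size of order at least $\sqrt{E((U_N)_2)}\to\infty$, so attaching the $o(1)$ surplus offspring to it perturbs its family size only to lower order; this is why, in that case, any allocation keeping the model a modified Moran model is admissible.

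The heart of the argument is Eq.~\eqref{eq:cond_phi}, which I would verify as follows. For $l\ge 2$ both sides equal $0$, any factor $(\nu_i)_{a_i}$ with $a_i\ge 2$ on a second parent vanishing; so only $l=1$ remains. In the decline step, uniform subsampling is the sampling-consistency operation: conditionally on $U'_{N_r}$, the number of surviving offspring of the reproducing parent is hypergeometric, so $E\big((\,\cdot\,)_a\mid U'_{N_r}\big)=(N_{r-1})_a(U'_{N_r})_a/(N_r)_a$ and therefore $\Phi^{(N)}_1(r;a)=E((U'_{N_r})_a)/(N_r)_a=\Phi^{(N_r)}_1(a)$ with zero error. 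In the growth step, the reproducing parent keeps $U'_{N_r}$ offspring among $N_{r-1}>N_r$ total, so $\Phi^{(N)}_1(r;a)=E((U'_{N_r})_a)/(N_{r-1})_a$ differs from $\Phi^{(N_r)}_1(a)=E((U'_{N_r})_a)/(N_r)_a$ by the relative amount $(N_r)_a/(N_{r-1})_a-1=O(|d_{N,r}|/N_r)$. Summing over $0\le r\le[c_N^{-1}t]$ and using $\Phi^{(N_r)}_1(a)=O(c_{N_r})=O(c_N)$ uniformly in $r$ (a mild regularity requirement on $c_N$, or part of the hypotheses of the cited variant), the total deviation is $\asymp c_N N^{-1}\sum_r|d_{N,r}|=O(c_N)$, because $\sum_r|d_{N,r}|$ is the total variation of $r\mapsto N_r$ on $\{0,\dots,[c_N^{-1}t]\}$, which is $O(N)$ for a piecewise-monotone (or locally BV) $\nu$; this is the required $o_{\sum}(c_{N_r})$ bound. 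With Eqs.~\eqref{eq:cond_popsize} and \eqref{eq:cond_phi} in hand, the variant of \citet[Corollary 2.4]{mohle2002coalescent} delivers $(\tilde{\mathcal R}^{(N)}_{[\mathcal G^{-1}_N(t)]})_{t\ge 0}\stackrel{d}{\to}(\Pi_t)_{t\ge 0}$ in the Skorohod sense.

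The hard part will be this last cumulative estimate: the deviation in Eq.~\eqref{eq:cond_phi} has to be summed over all $\Theta(c_N^{-1})$ generations, even though the population size actually moves in only $O(N)=o(c_N^{-1})$ of them, so a careless allocation (for instance, attaching surplus offspring to the reproducing parent) would leave a per-generation perturbation of order $c_N$ that fails to sum to zero, whereas the ``non-reproducing parents'' prescription cuts it to order $c_N/N$ and, crucially, ties its total size to the \emph{variation} of $\nu$ rather than to the number of generations --- which is exactly why that prescription is imposed unless $E((U_N)_2)\to\infty$ supplies the slack. A secondary point, which I would handle inside the construction step, is accommodating order-$N$ jumps of $\nu$ (bottlenecks): such a jump must be spread over $o(c_N^{-1})$ but sufficiently many generations, so that it remains instantaneous on the coalescent time scale while being gentle enough per generation to preserve Eq.~\eqref{eq:cond_phi}; this is where the extra freedom of the $E((U_N)_2)\to\infty$ case is genuinely used.
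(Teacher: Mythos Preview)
Your approach is the paper's: both route through the variant of \citet[Corollary 2.4]{mohle2002coalescent} (packaged as Lemma~\ref{lem:cannpopsizetolambda} and Proposition~\ref{lem:3Mconv}), verify Eq.~\eqref{eq:cond_phi} via the hypergeometric identity for decline (zero error, exactly as you write) and the ratio $(N_r)_a/(N_{r-1})_a$ for growth, and exploit $Nc_N\to 0$ so that $|d_{N,r}|\leq 1$ eventually. There is, however, one genuine gap. You treat the uniform comparability $c_{N_r}=O(c_N)$ as ``a mild regularity requirement on $c_N$, or part of the hypotheses of the cited variant'', but this is precisely Eq.~\eqref{eq:cond_speed_cN}, it is \emph{not} part of the hypotheses, and it does not follow from $N_r\asymp N$ alone: here $c_N=\lambda_N^{-1}$ with $\lambda_N$ the total jump rate of a $\Lambda$-$N$-coalescent, and for general $\Lambda$ with $\int x^{-2}\Lambda(dx)=\infty$ the map $N\mapsto\lambda_N$ need not be regularly varying. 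The paper closes this with a short coupling (Lemma~\ref{lem:condspeed_modM}): restricting a $\Lambda$-$N_r$-coalescent to $\{1,\dots,N\}$ gives $\lambda_N\leq\lambda_{N_r}$, while any first merger among $N_r$ lineages hits at least two of the first $N$ with probability $\geq N(N-1)/(N_r(N_r-1))$, so $\lambda_{N_r}/\lambda_N\leq (c_1(t))^{-2}/2$. Without this, your summation step ``$\Phi^{(N_r)}_1(a)=O(c_{N_r})=O(c_N)$ uniformly in $r$'' is unjustified.

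Two smaller points. Your handling of the $E((U_N)_2)\to\infty$ case via ``typical family size $\sqrt{E((U_N)_2)}$'' is heuristic; the condition actually needed in Proposition~\ref{lem:3Mconv} to permit $A_{N,r}>0$ is $E(U_N)/E((U_N)_2)\to 0$ with $A_{N,r}\leq c_4E(U_N)$, and the paper obtains the former from $E(U_N)/E(U_N^2)\leq 1/E(U_N)$ by splitting on whether $E(U_N)\to\infty$. Finally, your total-variation bookkeeping for $\sum_r|d_{N,r}|$ is correct but heavier than needed: once $|d_{N,r}|\leq 1$, the per-generation error is $O(c_N/N)$ uniformly in $r$, and summing $O(c_N^{-1})$ such terms already gives $O(1/N)$.
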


\begin{remark}
The condition of $\Lambda([0,1])=1$ in both theorems is not very important: If one scales by $c_2c_N$ instead of $c_N$ for any $c_2>0$, the rescaled discrete genealogies converge to the $c_2\Lambda$-$n$-coalescent. 
\end{remark}

If $c_{N,r}$ and $c_N$ are known well enough, the time-change in Theorem \ref{thm:modMoran2tcMMC} can also be shifted to the limit.
\begin{corollary}\label{cor:beta_thm2}
Let $\Lambda\in\mathcal{M}[0,1]$ be a Beta(a,b)-distribution with $a\in(0,1)$ and $b>0$. Let $\nu:\mathbb{R}_{\geq 0}\to \mathbb{R}_{>0}$.  Then, there exist population sizes satisfying Eq. \eqref{eq:cond_popsize} for $\nu$ so that the genealogies $(\tilde{\mathcal{R}}^{(N)}_r)_{r\in\N_0}$ of the modified Moran model with variable population sizes fulfill $(\tilde{\mathcal{R}}^{(N)}_{[ c_N^{-1}t]})_{t\geq 0}\stackrel{d}{\to}(\Pi_{\mathcal{G}(t)})_{t\geq 0}$ in the Skorohod-sense, where $\mathcal{G}(t)=\int^t_0 (\nu(s))^{a-2} ds$ and $(\Pi_t)_{t\geq 0}$ is a $Beta(a,b)$-$n$-coalescent. In the discrete model, additional individuals can be added in any way so that the resulting model is still a modified Moran model.            
\end{corollary}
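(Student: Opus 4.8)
The plan is to deduce Corollary~\ref{cor:beta_thm2} from Theorem~\ref{thm:modMoran2tcMMC} by computing $c_N$ and $c_{N,r}$ for the $U'_N$-based modified Moran model precisely enough to recognise that the model-specific rescaling $\mathcal{G}_N^{-1}$ of that theorem, read on the natural $c_N^{-1}$ time scale, converges to the deterministic time change $\mathcal{G}(t)=\int_0^t\nu(s)^{a-2}\,ds$.

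First I would pin down $c_N$ and verify that $\mathrm{Beta}(a,b)$ with $a\in(0,1)$ is covered by Theorem~\ref{thm:modMoran2tcMMC}. From the identity $j(j-1)\binom{N}{j}=N(N-1)\binom{N-2}{j-2}$ and the binomial theorem applied inside the expectations defining $\lambda_{N,j}$ one gets $E((U'_N)_2)=\lambda_N^{-1}N(N-1)$, where $\lambda_N=\sum_{k=2}^N\binom{N}{k}\lambda_{N,k}$ is the total rate of the $\Lambda$-$N$-coalescent; since in a modified Moran model $E((\nu_1)_2)=N^{-1}E((U'_N)_2)=\lambda_N^{-1}(N-1)$, this gives $c_N=E((\nu_1)_2)/(N-1)=\lambda_N^{-1}$. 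For $\Lambda=\mathrm{Beta}(a,b)$ we have $\lambda_{N,k}=B(k-2+a,N-k+b)/B(a,b)$, and a Gamma-function estimate shows each fixed-$k$ summand of $\lambda_N$ behaves like $\tfrac{\Gamma(k-2+a)}{k!\,B(a,b)}N^{2-a}$ while the terms with $k$ of order $N$ are of strictly smaller order; as $\sum_{k\ge2}\Gamma(k-2+a)/k!<\infty$ (because $a<2$), this yields $\lambda_N\sim C(a,b)N^{2-a}$. Consequently $E((U'_N)_2)(N-1)^{-1}=N/\lambda_N\sim C(a,b)^{-1}N^{a-1}\to0$ as $a<1$, so Theorem~\ref{thm:modMoran2tcMMC} applies, and $c_N=1/\lambda_N$ is regularly varying in $N$ with index $a-2$. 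I then invoke Theorem~\ref{thm:modMoran2tcMMC}: it furnishes population sizes $(N_r)$ satisfying \eqref{eq:cond_popsize} for $\nu$ with $(\tilde{\mathcal{R}}^{(N)}_{[\mathcal{G}_N^{-1}(t)]})_{t\ge0}\stackrel{d}{\to}(\Pi_t)_{t\ge0}$, the $\mathrm{Beta}(a,b)$-$n$-coalescent, and I keep exactly these $(N_r)$.

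The heart of the argument is to show $F_N(c_N^{-1}s)\to\mathcal{G}(s)$, locally uniformly in $s$. Using \eqref{eq:cond_phi} with $l=1$, $a_1=2$ I would replace $c_{N,r}$ by $c_{N_r}$ up to $o_{\sum}(c_{N_r})$ errors, which sum over $r\le[c_N^{-1}s]$ to $o(1)$ thanks to the bounds $c_1(s)N\le N_r\le c_2(s)N$ from \eqref{eq:cond_popsize}; since $c_M=1/\lambda_M$ is regularly varying with index $a-2$, those same bounds make $c_{N_r}/c_N$ uniformly bounded for $r\le[c_N^{-1}s]$, while for $r\sim c_N^{-1}v$ the profile convergence $N^{-1}N_{[vc_N^{-1}]}\to\nu(v)$ forces $c_{N_r}/c_N\to\nu(v)^{a-2}$. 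Hence $F_N(c_N^{-1}s)=\sum_{r\le[c_N^{-1}s]}c_{N_r}+o(1)=c_N\sum_{r\le[c_N^{-1}s]}(c_{N_r}/c_N)+o(1)$ is a Riemann-type sum converging to $\int_0^s\nu(v)^{a-2}\,dv=\mathcal{G}(s)$; local uniformity follows because $F_N(c_N^{-1}\cdot)$ is nondecreasing and $\mathcal{G}$ is continuous and strictly increasing ($\nu>0$). It remains to transport the limit: as every $c_{N,r}>0$, $F_N$ is strictly increasing at the integers and a one-line check gives $\mathcal{G}_N^{-1}(F_N(u))=[u]$ for all $u\ge0$, so $\tilde{\mathcal{R}}^{(N)}_{[c_N^{-1}s]}=\tilde{\mathcal{R}}^{(N)}_{[\mathcal{G}_N^{-1}(F_N(c_N^{-1}s))]}$; combining the Skorohod convergence above with $F_N(c_N^{-1}\cdot)\to\mathcal{G}$ locally uniformly and the standard continuity of the composition map $(x,\phi)\mapsto x\circ\phi$ at pairs with $\phi$ continuous and nondecreasing yields $(\tilde{\mathcal{R}}^{(N)}_{[c_N^{-1}s]})_{s\ge0}\stackrel{d}{\to}(\Pi_{\mathcal{G}(s)})_{s\ge0}$, as claimed.

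The step I expect to be hardest is the asymptotics of $\lambda_N$: the whole conclusion rests on $c_N=1/\lambda_N$ being regularly varying with index $a-2$ (equivalently $\lambda_N\asymp N^{2-a}$), which is precisely what forces $a<1$ and what puts the exponent $a-2$ into $\mathcal{G}$, and establishing it needs the full sum $\sum_{k=2}^N\binom{N}{k}\lambda_{N,k}$ to be controlled by uniform Gamma-function bounds, not merely its binary term. Secondary care is needed to justify the Riemann-sum limit for a general profile $\nu$ (one wants $\nu^{a-2}$ Riemann integrable on compacts --- true for the exponential-phase and bottleneck profiles of interest --- together with the uniform control of $c_{N_r}/c_N$ supplied by \eqref{eq:cond_popsize}), to handle the summed $o_{\sum}$ errors, and to make the final Skorohod time-change/composition step precise.
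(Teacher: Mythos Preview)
Your proposal is correct and follows the same two-step strategy as the paper: first verify that Theorem~\ref{thm:modMoran2tcMMC} applies, then convert the model-specific rescaling $\mathcal{G}_N^{-1}$ into the linear scaling $c_N^{-1}$ with the time change pushed onto the limit. The difference is one of packaging rather than substance. The paper does not rederive anything: it quotes the asymptotic $c_N\sim \tfrac{(2-a)\Gamma(b)}{\Gamma(a+b)}N^{a-2}$ from \cite{huillet2013extended} (recorded here as Eq.~\eqref{eq:beta_modMoran}), which immediately gives $E((U'_N)_2)(N-1)^{-1}=Nc_N=O(N^{a-1})\to 0$ and places $\mathrm{Beta}(a,b)$ with $a\in(0,1)$ inside Theorem~\ref{thm:modMoran2tcMMC}; and for the time-shift it invokes Lemma~\ref{lem:shifttc}, which is exactly the Riemann-sum computation $F_N(tc_N^{-1})\to\int_0^t\nu(s)^{-\gamma}ds$ together with the passage from $\mathcal{G}_N^{-1}$-scaling to $c_N^{-1}$-scaling (the latter relying on Eq.~\eqref{eq:lineartimescale_limitcoal} in Lemma~\ref{lem:cannpopsizetolambda}, itself deferred to \citet[Sec.~4]{moehle_robust1998}). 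You instead work both pieces out by hand: a direct estimate of $\lambda_N$ to get regular variation of $c_N$, and an explicit Skorohod composition argument for the time-shift. Your route is more self-contained and makes transparent where the exponent $a-2$ enters, at the cost of reproving results the paper already has available; the paper's route is a two-line citation of Eq.~\eqref{eq:beta_modMoran} and Lemma~\ref{lem:shifttc}. Neither approach addresses the ``any allocation'' clause explicitly, but it follows because $E((U'_N)_2)\sim c_N N^2\sim N^{a}\to\infty$, which is the hypothesis in Theorem~\ref{thm:modMoran2tcMMC} permitting arbitrary allocation.
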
 

The specific models used in each of the two theorems are not the only possibilities of modified Moran models with variable population sizes to converge to $\Lambda$-$n$-coalescents. For instance, if one only allows certain population size changes, one can also use the modified Moran model used in Theorem \ref{thm:modMoran2tcMMC} for some $\Lambda$ covered by Theorem \ref{thm:modMoran2tcMMCthinned}.
\begin{corollary}\label{cor:betanonthin}
Let $\Lambda\in\mathcal{M}[0,1]$ be a Beta(a,b)-distribution with $a\in(1,2)$ and $b>0$. Consider an exponentially growing modified Moran model population on the coalescent time scale, i.e. $\nu(t)=\exp(-\rho t)$ Then, there exist population sizes satisfying Eq. \eqref{eq:cond_popsize} for $\nu$ so that the genealogies $(\tilde{\mathcal{R}}^{(N)}_r)_{r\in\N_0}$ of the modified Moran model with variable population sizes fulfills $(\tilde{\mathcal{R}}^{(N)}_{[c_N^{-1}t]})_{t\geq 0}\stackrel{d}{\to}(\Pi_{\mathcal{G}(t)})_{t\geq 0}$ in the Skorohod-sense, where $\mathcal{G}(t)=\int^t_0 (\nu(s))^{a-2} ds$ and $(\Pi_t)_{t\geq 0}$ is a $\Lambda$-$n$-coalescent. In the discrete model, additional individuals can be added in any way so that the resulting model is still a modified Moran model.                         
\end{corollary}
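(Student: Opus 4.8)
The plan is to run the argument behind Theorem~\ref{thm:modMoran2tcMMC} and Corollary~\ref{cor:beta_thm2} for the \emph{unthinned} modified Moran model $U_N=U'_N$, replacing by a direct per-generation estimate the one place where Theorem~\ref{thm:modMoran2tcMMC} uses its hypothesis $E((U'_N)_2)(N-1)^{-1}\to0$, and then pushing the time change into the index of the limit as in Corollary~\ref{cor:beta_thm2}. The main obstacle will be exactly verifying \eqref{eq:cond_phi} for this model without the crutch $Nc_N\to0$ (for $a\in(1,2)$ one has $Nc_N\to\infty$): this requires the explicit falling-factorial moments of $U'_N$ together with a uniform per-generation bound on the population increase, which only a monotone, smooth profile such as exponential growth supplies — and the same failure of $Nc_N\to0$ is also what forces the unrestricted-allocation clause.

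\emph{Scaling.} From \eqref{eq:LambdamodMoran} one obtains, for $j\ge2$, the closed form $E((U'_N)_j)=\lambda_N^{-1}(N)_j\,E(X^{j-2})$; in particular $c_N=E((U'_N)_2)/(N)_2=\lambda_N^{-1}$. Writing $\lambda_N=\int_0^1 x^{-2}(1-(1-x)^N-Nx(1-x)^{N-1})\,\Lambda(dx)$ and using $\Lambda(dx)\sim B(a,b)^{-1}x^{a-1}\,dx$ near $0$, the substitution $x=u/N$ with dominated convergence (legitimate precisely because $a\in(1,2)$) gives $\lambda_N\sim C_{a,b}\,N^{2-a}$ with $C_{a,b}=B(a,b)^{-1}\int_0^\infty u^{a-3}(1-e^{-u}-ue^{-u})\,du\in(0,\infty)$. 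Hence $r\mapsto\lambda_r$ is regularly varying of index $2-a>0$, so $c_N\to0$, and for sequences $M\sim M'$ of order $N$, $c_M/c_{M'}=\lambda_{M'}/\lambda_M\to(M/M')^{a-2}$.

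\emph{Population sizes and condition \eqref{eq:cond_phi}.} I would take $N_r:=\lceil N e^{-\rho r c_N}\rceil$; since $c_N\to0$ this satisfies \eqref{eq:cond_popsize} for $\nu(t)=e^{-\rho t}$ with $c_1(t)=e^{-\rho t}$, $c_2(t)=1$, and it is a pure-growth profile with $d_{N,r}=N_{r-1}-N_r\ge0$ and $d_{N,r}=O(Nc_N)$ uniformly in $r\le[c_N^{-1}t]$. Only the single multiplying parent can contribute to $\Phi^{(N)}_l(r;\cdot)$, so all $l\ge2$ terms vanish identically; for $l=1$, writing $U_{n,r}=U_{N_r}+A_{N,r}$ with $0\le A_{N,r}\le d_{N,r}$ and expanding the falling factorial of a sum, $\Phi^{(N)}_1(r;k)-\Phi^{(N_r)}_1(k)$ equals $E((U_{N_r})_k)\big((N_{r-1})_k^{-1}-(N_r)_k^{-1}\big)$ plus a finite sum of terms $E((U_{N_r})_j)(A_{N,r})_{k-j}/(N_{r-1})_k$, $0\le j\le k-1$. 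Using $E((U_{N_r})_j)\asymp c_N N^j$ for $j\ge2$, $E(U_{N_r})\asymp c_N N$ (the mean is finite since $a>1$), $(N_{r-1})_k-(N_r)_k\asymp d_{N,r}N^{k-1}$ and $A_{N,r}=O(c_N N)$, each of these is $O(c_N^2)$; summed over the $O(c_N^{-1})$ generations and divided by $c_{N_r}\asymp c_N$ this is $O(c_N)\to0$, i.e. the difference is $o_{\sum}(c_{N_r})$. The same bound holds for every admissible allocation of the extra offspring (each only raises the multiplying parent's count or turns a $0$ into a $1$); moreover $d_{N,r}\asymp Nc_N\to\infty$ forces $P(U_{N_r}-1<d_{N,r})\to1$, so the non-reproducing-parent scheme of Theorem~\ref{thm:modMoran2tcMMC} is not feasible and one genuinely must allow arbitrary allocation here.

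\emph{Convergence and time change.} With \eqref{eq:cond_popsize}, \eqref{eq:cond_phi} and the fixed-$N_r$ convergence $c_{N_r}^{-1}\Phi^{(N_r)}_1(k)\to\lambda_{k,k}$ (\citet[Prop.~3.4]{huillet2013extended}, \citet[Thm.~2.1]{Moehle2001}) in hand, \citet[Cor.~2.4]{mohle2002coalescent} in the variant used for Theorems~\ref{thm:modMoran2tcMMCthinned}--\ref{thm:modMoran2tcMMC} yields $(\tilde{\mathcal R}^{(N)}_{[\mathcal G^{-1}_N(t)]})_{t\ge0}\stackrel{d}{\to}(\Pi_t)_{t\ge0}$ with $\Pi$ the $\mathrm{Beta}(a,b)$-$n$-coalescent. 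Finally, as in Corollary~\ref{cor:beta_thm2}, $F_N([c_N^{-1}u])=\sum_{r=1}^{[c_N^{-1}u]}c_{N,r}=c_N\sum_{r=1}^{[c_N^{-1}u]}(c_{N_r}/c_N)+o(1)$, and $c_{N_r}/c_N\to(N_r/N)^{a-2}\to\nu(rc_N)^{a-2}$ by the Scaling step, so the sum is a Riemann sum of mesh $c_N$ converging to $\int_0^u\nu(s)^{a-2}\,ds=\mathcal G(u)$. Hence $c_N\mathcal G^{-1}_N(\mathcal G(u))\to u$, and since the genealogy is constant between coalescences, which are $\asymp c_N^{-1}$ generations apart, replacing $[\mathcal G^{-1}_N(\mathcal G(u))]$ by $[c_N^{-1}u]$ changes nothing in the limit, giving $(\tilde{\mathcal R}^{(N)}_{[c_N^{-1}u]})_{u\ge0}\stackrel{d}{\to}(\Pi_{\mathcal G(u)})_{u\ge0}$, which is the claim.
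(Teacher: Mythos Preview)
Your proof is correct and follows essentially the same route as the paper: verify \eqref{eq:cond_phi} for the unthinned model $U_N=U'_N$ via the bound $A_{N,r}\le d_{N,r}=O(Nc_N)$ together with $E((U'_N)_j)\asymp c_N N^j$ and $E(U'_N)\asymp c_N N$, apply the M\"ohle-type convergence, and shift the time change by a Riemann-sum argument. The paper packages these same steps into Proposition~\ref{lem:3Mconv} and Lemma~\ref{lem:shifttc} (expanding $(U_{N_r}+A_{N,r})_{a_1}$ via Stirling numbers where you use the cleaner Vandermonde identity for falling factorials), but the underlying estimates and the use of $E(U_N)\asymp Nc_N$ from \citet{huillet2013extended} are the same.
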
 
Finally, for the classic Moran model, we can establish
\begin{proposition}\label{prop:Moranlimit}
For the standard Moran model and a population size profile $\nu:\mathbb{R}_{\geq 0}\to \mathbb{R}_{>0}$, there exist population size changes allowed by Eq. \eqref{eq:cond_popsize} so that $(\tilde{\mathcal{R}}^{(N)}_{[c_N^{-1}t]})_{t\geq 0}\stackrel{d}{\to}(\Pi_{\mathcal{G}(t)})_{t\geq 0}$ in the Skorohod-sense, where $\mathcal{G}(t)=\int^t_0 (\nu(s))^{-2} ds$. Individuals are added only as offspring of non-reproducing parents (in the fixed-$N_r$ model) if the population size increases.
\end{proposition}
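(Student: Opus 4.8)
\emph{Proof proposal.} The plan is to recognise the standard Moran model as the modified Moran model produced by Theorem~\ref{thm:modMoran2tcMMC} in the degenerate case $\Lambda=\delta_0$, and then to move its discrete time change into the limit exactly as for Corollary~\ref{cor:beta_thm2}, exploiting that all coalescence probabilities of the Moran model are available in closed form. For $\Lambda=\delta_0$ the $\Lambda$-$N$-coalescent is Kingman's $N$-coalescent, so the first-merger size $U'_N$ of Eq.~\eqref{eq:LambdamodMoran} equals $2$ almost surely (directly, $\binom{N}{j}E(X^{j-2}(1-X)^{N-j})=0$ for $j\ge 3$ when $X\sim\delta_0$, and $\lambda_N=\binom{N}{2}$). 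Hence the modified Moran model with $U_N:=U'_N$ is precisely the standard Moran model, and $E((U'_N)_2)(N-1)^{-1}=2(N-1)^{-1}\to 0$, so $\Lambda=\delta_0$ is covered by Theorem~\ref{thm:modMoran2tcMMC}. Since $E((U_N)_2)=2\not\to\infty$, that theorem moreover dictates that when $N_{r-1}>N_r$ the added individuals are offspring of non-reproducing parents of the fixed-$N_r$ model --- the allocation claimed in the proposition; as $U_{N_r}=2$ this permits adding at most one individual per generation, still compatible with Eq.~\eqref{eq:cond_popsize} because size changes of order $N$ are spread over $O(c_N^{-1})=O(N^2)$ generations. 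Theorem~\ref{thm:modMoran2tcMMC} thus provides population sizes $(N_r)$ fulfilling Eq.~\eqref{eq:cond_popsize} for $\nu$ with $(\tilde{\mathcal R}^{(N)}_{[\mathcal G_N^{-1}(t)]})_{t\ge 0}\stackrel{d}{\to}(\Pi_t)_{t\ge 0}$ for Kingman's $n$-coalescent $\Pi$.

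It remains to transfer the time change. For the Moran model $c_N=\frac{2}{N(N-1)}$, and the verification of Eq.~\eqref{eq:cond_phi} carried out inside the proof of Theorem~\ref{thm:modMoran2tcMMC} gives $c_{N,r}=\frac{2}{N_r(N_r-1)}+o_{\sum}(c_{N_r})$. Summed over $r\le[c_N^{-1}t]$ the $o_{\sum}$ contributions vanish by definition (there are $O(c_N^{-1})$ summands and $c_{N_r}\asymp c_N$), while $\sum_{r=1}^{[c_N^{-1}t]}\frac{2}{N_r(N_r-1)}=(1+o(1))\,c_N\sum_{r=1}^{[c_N^{-1}t]}\big(N^{-1}N_r\big)^{-2}$ (using $\frac{2}{N^2c_N}=\frac{N-1}{N}\to 1$ and $\frac{N_r}{N_r-1}\to 1$ uniformly), which, since $N^{-1}N_{[sc_N^{-1}]}\to\nu(s)$ for each $s$ with $0<c_1(t)\le N^{-1}N_r\le c_2(t)<\infty$ on the relevant range, is a Riemann sum converging to $\int_0^t\nu(s)^{-2}\,ds=\mathcal G(t)$. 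Hence $F_N(c_N^{-1}t)\to\mathcal G(t)$, and passing to generalized inverses of these monotone functions, $c_N\mathcal G_N^{-1}(\mathcal G(t))\to t$ locally uniformly in $t$ (monotone convergence to a continuous limit is locally uniform). Composing $(\tilde{\mathcal R}^{(N)}_{[\mathcal G_N^{-1}(s)]})_{s\ge0}\stackrel{d}{\to}(\Pi_s)_{s\ge0}$ with the deterministic, continuous, increasing time change $s=\mathcal G(t)$, and noting that the two generation reparametrisations $\mathcal G_N^{-1}(\mathcal G(t))$ and $c_N^{-1}t$ differ by $o(c_N^{-1})$, i.e. by $o(1)$ on the coalescent scale, a standard time-change lemma for Skorohod convergence yields $(\tilde{\mathcal R}^{(N)}_{[c_N^{-1}t]})_{t\ge0}\stackrel{d}{\to}(\Pi_{\mathcal G(t)})_{t\ge0}$, as claimed.

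The step I expect to be most delicate is this last replacement of the generation index $\mathcal G_N^{-1}(\mathcal G(t))$ by $c_N^{-1}t$: it rests on $\mathcal G$ being continuous and strictly increasing on $[0,\infty)$ (which holds because $0<\nu<\infty$, provided $\nu$ is regular enough that $\mathcal G$ is well defined), on the local uniformity of $c_N\,\mathcal G_N^{-1}\circ\mathcal G\to\mathrm{id}$, and on the Kingman coalescent having almost surely no jump at a fixed time; the Riemann-sum step similarly uses, tacitly, that $\nu^{-2}$ is Riemann integrable on compacts --- a mild assumption on the population size profile --- together with the fact that the $(N_r)$ furnished by Theorem~\ref{thm:modMoran2tcMMC} discretise $\nu$ compatibly enough for the sum of summand differences to be negligible.
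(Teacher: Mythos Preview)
Your argument is correct and follows the same route as the paper: recognise the standard Moran model as the $\Lambda=\delta_0$ case of Eq.~\eqref{eq:LambdamodMoran}, apply Theorem~\ref{thm:modMoran2tcMMC}, and then move the discrete time change to the limit. The paper carries out this last step in one line by invoking Lemma~\ref{lem:shifttc} (since $c_N=2/(N(N-1))=2N^{-2}+o(N^{-2})$ matches $f(x)=cx^{-\gamma}$ with $\gamma=2$), whereas you reprove that lemma by hand via the Riemann-sum computation and the pseudo-inverse replacement; the residual worries you flag about the Skorohod time-change step are precisely what the paper (inside Lemma~\ref{lem:cannpopsizetolambda}) delegates to \citet[Sect.~4]{moehle_robust1998}.
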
 
For Beta$(2-\alpha,\alpha)$-$n$-coalescents, $1\leq\alpha<2$, genealogies sampled from the fixed-$N$ Cannings models introduced in \cite{Schweinsberg2003} also converge weakly to these Beta coalescent processes (after rescaling of time) for $N\to\infty$. \\
This model (for fixed population size $N$) lets each individual in any generation $r$ produce a number of (potential) offspring $X^{(r)}_i$, i.i.d. across individuals and generations, distributed as a tail-heavy random variable $X$ with $E(X)=\mu>1$, i.e. 
\begin{equation}\label{eq:def_schwvar}
P(X\geq k)\sim Ck^{-\alpha} \mbox{ on } \N  \mbox{($C>0$ constant, $1<\alpha<2$)}.
\end{equation}
Then, $N$ offspring are chosen to form the next generation. If less than $N$ offspring are produced, the missing next generation individuals are arbitrarily associated with parents. Here, this is done by randomly choosing a parent, which preserves exchangeability and makes the model a Cannings model. The genealogies of a sample of size $n$ converge for $N\to\infty$ and time rescaled by $c_N^{-1}$ to the Beta($2-\alpha$,$\alpha$)-$n$-coalescent, see \citet[Thm. 4]{Schweinsberg2003}. \\
This model can very easily extended to variable population sizes by just sampling from the potential offspring. The tail-heavy distributions used produce, asymptotically for $N\to\infty$, enough potential offspring to cover growing population sizes of order $N$ as allowed by Eq. \eqref{eq:cond_popsize}.  
\begin{lemma}\label{lem:popsizechange_schwein}
Let $d_{N,r}:=N_{r-1}-N_r>0$. Assume that for any fixed $t$, for all $r\leq c^{-1}_Nt$ there exists a null sequence $(d_N)_{N\in\N}$ with $d_{N,r}/N\leq d_N$ for $N\to\infty$. Then, $P(\sum_{i=1}^{N_r} X^{(r)}_{i}<N_{r-1})\leq A^{N}$ with $N=N_0$ and $A<1$.
\end{lemma}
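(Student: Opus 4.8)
The plan is to prove the bound by a Cram\'er-type estimate on the \emph{lower} tail of the sum $S_r:=\sum_{i=1}^{N_r}X^{(r)}_{i}$, for which the heavy right tail in \eqref{eq:def_schwvar} is irrelevant. First I would record the elementary properties of the Laplace transform $\psi(\lambda):=E(e^{-\lambda X})$. Since $X\geq 0$, $\psi(\lambda)$ is finite for every $\lambda\geq 0$ and $\psi(0)=1$; moreover, using $|e^{-\lambda x}-1|\leq\lambda x$ for $x\geq 0$ together with $X\in L^1$, dominated convergence gives the right derivative $\psi'(0^+)=-E(X)=-\mu$, hence $\log\psi(\lambda)=-\mu\lambda+o(\lambda)$ as $\lambda\downarrow 0$.

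Next I would reduce the event to a deviation strictly below the mean $E(S_r)=\mu N_r$. By \eqref{eq:cond_popsize} we have $N_r\geq N^-(t)=c_1(t)N>0$ for all $r\leq c_N^{-1}t$, while by hypothesis $d_{N,r}/N\leq d_N$ with $d_N\to 0$. Fix $\theta\in(1/\mu,1)$, which is possible because $\mu>1$. Then
\begin{equation*}
N_{r-1}=N_r+d_{N,r}\leq N_r\Bigl(1+\tfrac{d_N}{c_1(t)}\Bigr),
\end{equation*}
and since $1+d_N/c_1(t)\to 1<\theta\mu$, for all sufficiently large $N$ (how large depending only on $t$) and all $r\leq c_N^{-1}t$ one has $N_{r-1}\leq\theta\mu N_r$; thus the target $N_{r-1}$ lies strictly below $E(S_r)$, which is exactly what turns $\{S_r<N_{r-1}\}$ into a lower-deviation event.

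The Chernoff bound then yields, for any $\lambda>0$,
\begin{equation*}
P\bigl(S_r<N_{r-1}\bigr)\leq e^{\lambda N_{r-1}}E\bigl(e^{-\lambda S_r}\bigr)=e^{\lambda N_{r-1}}\psi(\lambda)^{N_r}\leq\bigl(e^{\lambda\theta\mu}\psi(\lambda)\bigr)^{N_r},
\end{equation*}
using independence of the $X^{(r)}_i$ across individuals for the middle equality. Since $\log\bigl(e^{\lambda\theta\mu}\psi(\lambda)\bigr)=\lambda\theta\mu+\log\psi(\lambda)=-\mu(1-\theta)\lambda+o(\lambda)$ is negative for all small enough $\lambda>0$, I can fix one such $\lambda_0$ and set $\rho:=e^{\lambda_0\theta\mu}\psi(\lambda_0)<1$. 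Combining with $N_r\geq c_1(t)N$ and $\rho<1$ gives $P(S_r<N_{r-1})\leq\rho^{N_r}\leq\bigl(\rho^{\,c_1(t)}\bigr)^{N}$, so the assertion holds with $A:=\rho^{\,c_1(t)}<1$ and $N=N_0$. Since $\mu$, $\theta$, $\lambda_0$ and $c_1(t)$ do not depend on $r$, the bound is uniform over $r\leq c_N^{-1}t$, as needed.

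The only step that needs a word of caution is the first observation: the event in question is a \emph{shortage} of offspring, i.e.\ a left-tail event for a sum of nonnegative summands of finite mean $\mu>1$, so its probability decays exponentially regardless of the infinite variance of $X$; the heavy-tail assumption \eqref{eq:def_schwvar} is needed elsewhere (to obtain a multiple-merger limit) but plays no role here. Everything else is the routine first-order expansion of $\psi$ at $0$ and the uniform lower bound $N_r\geq c_1(t)N$ coming from \eqref{eq:cond_popsize}.
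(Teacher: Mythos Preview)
Your argument is correct and is essentially the same as the paper's: both apply a Chernoff/Markov bound to the lower tail of $S_r$, the paper via the probability generating function $f(u)=E(u^X)$ and you via the Laplace transform $\psi(\lambda)=E(e^{-\lambda X})$, which are related by $u=e^{-\lambda}$. In both cases the key inputs are $f'(1^-)=\mu>1$ (equivalently $\psi'(0^+)=-\mu$), the uniform bound $d_{N,r}/N_r\to 0$, and the lower bound $N_r\geq c_1(t)N$ from \eqref{eq:cond_popsize}; your bookkeeping with $\theta\in(1/\mu,1)$ plays the same role as the paper's $\epsilon$ with $u_0^{1+\epsilon}>f(u_0)$.
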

This gives us an alternative Cannings model with variable population sizes to define time-changed Beta coalescents as the limit of its discrete genealogies. 
\begin{theorem}\label{prop:schweinspopsized}
Consider the Cannings model coming from sampling from potential i.i.d. offspring following Eq. \eqref{eq:def_schwvar} with parameter $\alpha\in[1,2)$. For any $\nu:\mathbb{R}_{\geq 0}\to \mathbb{R}_{>0}$, there exist variable population sizes $(N_r)_{r\in\N_0}$ fulfilling \eqref{eq:cond_popsize} for $\nu$ so that the discrete $n$-coalescents converge 
 $(\tilde{\mathcal{R}}^{(N)}_{[c_N^{-1}t]})_{t\geq 0}\stackrel{d}{\to}(\Pi_{\mathcal{G}(t)})_{t\geq 0}$ in the Skorohod-sense, where $\mathcal{G}(t)=\int^t_0 (\nu(s))^{1-\alpha} ds$ and where $(\Pi_t)_{t\geq 0}$ is the Beta($2-\alpha,\alpha$)-$n$-coalescent.
\end{theorem}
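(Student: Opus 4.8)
The plan is to run, for the variable-size version of Schweinsberg's model, the same argument that proves Theorem~\ref{thm:modMoran2tcMMC} and Corollary~\ref{cor:beta_thm2}: verify the population-size control \eqref{eq:cond_popsize} and the transition-probability comparison \eqref{eq:cond_phi} together with the fixed-$N$ convergence, feed these into the variant of \citet[Cor.~2.4]{mohle2002coalescent} established earlier, and then read off $\mathcal{G}$ from the known asymptotics of $c_N$. First I would fix a horizon $T>0$ and discretize $\nu$ along the coalescent clock, e.g.\ $N_r:=\lceil N\,\nu(r c_N)\rceil$. Under the regularity of $\nu$ that \eqref{eq:cond_popsize} implicitly demands (locally bounded away from $0$ and $\infty$, and, say, continuous), this gives $c_1(T)N\le N_r\le c_2(T)N$ and $N^{-1}N_{\lfloor tc_N^{-1}\rfloor}\to\nu(t)$ for $r\le c_N^{-1}T$, so \eqref{eq:cond_popsize} holds; moreover consecutive population sizes then differ by $O(Nc_N)=o(N)$, so Lemma~\ref{lem:popsizechange_schwein} applies.

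The crux --- and the reason this model ``extends very easily'' --- is that \eqref{eq:cond_phi} holds with negligible error. On the event $\{\sum_{i=1}^{N_r}X^{(r)}_i\ge N_{r-1}\}$, generation $r-1$ is a uniform $N_{r-1}$-subset of the $\sum_i X^{(r)}_i$ potential offspring, so the $\sum_j a_j$ further uniformly sampled individuals form a uniform $(\sum_j a_j)$-subsample of those potential offspring, whose parental block structure has a law depending only on $(X^{(r)}_1,\dots,X^{(r)}_{N_r})$ and on $(a_1,\dots,a_l)$, not on $N_{r-1}$; concretely $\Phi^{(N)}_l(r;a_1,\dots,a_l)=(N_r)_l\,E\big[\prod_{j=1}^{l}(X_j)_{a_j}\,/\,(\sum_{i=1}^{N_r}X_i)_{\sum_j a_j}\big]$, which is exactly $\Phi^{(N_r)}_l(a_1,\dots,a_l)$. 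By Lemma~\ref{lem:popsizechange_schwein} the complementary event has probability at most $A^{N}$ with $A<1$; since $c_N^{-1}$ grows at most polynomially in $N$, a union bound shows it fails nowhere on $\{r\le c_N^{-1}T\}$ with probability $\to1$, and in any case contributes only $o_{\sum}(c_{N_r})$. Hence \eqref{eq:cond_phi} holds (with an exponentially small error), and in particular $c_{N,r}=c_{N_r}+o_{\sum}(c_{N_r})$. Combining this with the fixed-$N$ convergence of \citet[Thm.~4]{Schweinsberg2003} (i.e.\ $c_M\to0$ and $c_M^{-1}\Phi^{(M)}_l(a_1,\dots,a_l)\to\phi_l(a_1,\dots,a_l)$, the rates of the $\mathrm{Beta}(2-\alpha,\alpha)$-$n$-coalescent) in the variant of \citet[Cor.~2.4]{mohle2002coalescent} gives $(\tilde{\mathcal{R}}^{(N)}_{[\mathcal{G}_N^{-1}(t)]})_{t\ge0}\stackrel{d}{\to}(\Pi_t)_{t\ge0}$.

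It then remains to move the time change into the limit, as in Corollary~\ref{cor:beta_thm2}. The tail assumption \eqref{eq:def_schwvar} yields (via \citet{Schweinsberg2003}) $c_M=M^{1-\alpha}L(M)$ with $L$ slowly varying --- a logarithmic factor when $\alpha=1$ --- so, using $N_r\asymp N\to\infty$, $c_{N,r}/c_N=(N_r/N)^{1-\alpha}(1+o(1))=\nu(rc_N)^{1-\alpha}(1+o(1))$ uniformly over $r\le c_N^{-1}T$. Therefore $F_N([c_N^{-1}t])=\sum_{r=1}^{[c_N^{-1}t]}c_{N,r}=c_N\sum_{r=1}^{[c_N^{-1}t]}\nu(rc_N)^{1-\alpha}(1+o(1))\to\int_0^t\nu(s)^{1-\alpha}\,ds=\mathcal{G}(t)$, a Riemann sum of mesh $c_N\to0$ of the locally bounded integrand $\nu^{1-\alpha}$. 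Since $F_N$ and the pseudo-inverse in \eqref{eq:def_mathcalG} are monotone and $\mathcal{G}$ is continuous and strictly increasing, $F_N\to\mathcal{G}$ locally uniformly and $\mathcal{G}_N^{-1}(\mathcal{G}(t))=c_N^{-1}t+o(c_N^{-1})$; composing this with the previous display and a routine random-time-change argument (using path regularity of $(\Pi_t)$) gives $(\tilde{\mathcal{R}}^{(N)}_{[c_N^{-1}t]})_{t\ge0}\stackrel{d}{\to}(\Pi_{\mathcal{G}(t)})_{t\ge0}$ in the Skorohod sense.

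I expect the only genuine difficulty to be analytic rather than structural: the estimates imported from \citet{Schweinsberg2003} --- both $c_M^{-1}\Phi^{(M)}_l\to\phi_l$ and $c_M=M^{1-\alpha}L(M)$ --- must be controlled \emph{uniformly} over the whole window $N_r\in[c_1(T)N,c_2(T)N]$ (not merely for each fixed $M$) so that the $o_{\sum}$-bookkeeping and the passage to the Riemann integral go through, and the boundary case $\alpha=1$ --- where the $\log$ correction makes $\mathcal{G}(t)=t$ and the limit is the size-change-insensitive Bolthausen--Sznitman coalescent --- needs extra attention since there $E(X)$ need not be finite and \eqref{eq:def_schwvar} has to be read with the slowly varying correction built in.
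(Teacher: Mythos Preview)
Your strategy matches the paper's: verify the hypotheses of Lemma~\ref{lem:cannpopsizetolambda} (the paper's variant of \citet[Cor.~2.4]{mohle2002coalescent}) and then invoke Lemma~\ref{lem:shifttc} to move the time change into the limit, using the asymptotics $c_N\sim \text{const}\cdot N^{1-\alpha}$ for $\alpha\in(1,2)$ and $c_N\sim(\log N)^{-1}$ for $\alpha=1$. The one substantive difference is your verification of \eqref{eq:cond_phi}: you observe that on $\{\sum_i X^{(r)}_i\ge N_{r-1}\}$ a uniform subsample of a uniform subsample is again uniform, so $\Phi^{(N)}_l(r;a_1,\dots,a_l)=(N_r)_l\,E\big[\prod_j(X_j)_{a_j}/(\sum_i X_i)_{\sum_j a_j}\big]=\Phi^{(N_r)}_l(a_1,\dots,a_l)$ \emph{exactly}, with the exponentially small bad event handled by Lemma~\ref{lem:popsizechange_schwein}. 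The paper instead writes $\Phi^{(N)}(r;\cdot)$ with $(N_{r-1})_{\sum a_i}$ in the denominator and then proves $(N_r)_{\sum a_i}/(N_{r-1})_{\sum a_i}\to 1$ uniformly via $|d_{N,r}|/N\to 0$; your route bypasses this computation entirely and is cleaner. Two small points: you should state explicitly that \eqref{eq:cond_speed_cN} holds because $c_N$ is regularly varying (the paper does this in one line and Lemma~\ref{lem:cannpopsizetolambda} needs it); and your concern about $\alpha=1$ is apt --- the paper's Eq.~\eqref{eq:def_schwvar} literally assumes $E(X)=\mu>1$ and $\alpha\in(1,2)$, so the $\alpha=1$ case in the theorem statement rests on Schweinsberg's separate treatment (his Lemma~16), and Lemma~\ref{lem:popsizechange_schwein} must be read accordingly.
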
 

The time-change function $\mathcal{G}(t)$, which appears in Theorem \ref{thm:modMoran2tcMMCthinned}, Corollary \ref{cor:betanonthin}, Propositions \ref{prop:Moranlimit} and \ref{prop:schweinspopsized}	 simplifies considerably for exponential growth on the coalescent time scale, i.e. $\nu(t)=exp(-\rho t)$ for $\rho>0$ in Eq. \eqref{eq:cond_popsize} (corresponding to population sizes given by $N_{r-1}=\lfloor N_r(1-c_N\rho)\rfloor$ for $r\in\N$).
\begin{corollary}\label{cor:gompertz}
For a population size profiles of exponential growth (on the coalescent scale) with growth rate $\rho$ and for $c_N=cN^{-\gamma}+o(N^{-\gamma})$ for $\gamma>0$, the time-change function $\mathcal{G}$ has the form
\begin{equation}\label{eq:mathcalGfexp}
\mathcal{G}(t)=\int_0^t e^{\rho\gamma s}ds=(\rho\gamma)^{-1}(e^{\rho\gamma t}-1).
\end{equation}
This implies that the waiting time between coalescent events are Gompertz distributed  with parameters $a=\lambda_be^{\rho\gamma t_0}$ and $b=\rho\gamma$., i.e. the waiting time $T$ for the next coalescence event, given the last coalescence at $t_0$ into $b$ lineages, fulfills 
$$P_{t_0}(T\leq t)=1-e^{\lambda_b(\rho\gamma)^{-1}(e^{\rho\gamma (t+t_0)}-e^{\rho\gamma t_0})}=1-e^{\lambda_b(\rho\gamma)^{-1}e^{\rho\gamma t_0}/e^{\rho\gamma t}-1}.$$ 
\end{corollary}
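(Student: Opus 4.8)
The corollary has two parts: the explicit form of $\mathcal{G}$ under exponential growth, and the identification of the interarrival times of the limiting coalescent as Gompertz. The plan is to first observe that the time-change appearing in each of Theorem~\ref{thm:modMoran2tcMMCthinned}, Corollary~\ref{cor:betanonthin}, Proposition~\ref{prop:Moranlimit} and Proposition~\ref{prop:schweinspopsized} has the common shape $\mathcal{G}(t)=\int_0^t\nu(s)^{-\gamma}\,ds$, where $\gamma$ is precisely the exponent with $c_N=cN^{-\gamma}+o(N^{-\gamma})$; then to substitute $\nu(s)=e^{-\rho s}$ and integrate; and finally to use the time-change/strong Markov property to read off the law of the waiting time between mergers.

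For the first step, recall that in each of those results $\mathcal{G}$ is obtained as the limit of $F_N(c_N^{-1}u)=\sum_{r=1}^{[c_N^{-1}u]}c_{N,r}$: by Eq.~\eqref{eq:cond_phi} one has $c_{N,r}=c_{N_r}+o_{\sum}(c_{N_r})$, and if $c_M=cM^{-\gamma}+o(M^{-\gamma})$ then $c_{N_r}=c_N(N_r/N)^{-\gamma}(1+o(1))$ (using that $N_r\asymp N$ by Eq.~\eqref{eq:cond_popsize}), so with $N_{\lfloor c_N^{-1}v\rfloor}/N\to\nu(v)$ the sum $c_N\sum_r(N_r/N)^{-\gamma}$ is a Riemann sum of mesh $c_N$ converging to $\int_0^u\nu(v)^{-\gamma}\,dv$; this forces the exponent in each stated $\mathcal{G}$ to equal $\gamma$. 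Concretely: for the thinned modified Moran model of Theorem~\ref{thm:modMoran2tcMMCthinned}, $c_N=E((U_N)_2)/(N)_2=N^{-\gamma}+2(1-P(A_N))/(N)_2=N^{-\gamma}+o(N^{-\gamma})$ since $\gamma<2$; for the model $U_N=U'_N$ of Corollary~\ref{cor:betanonthin}, a short computation from Eq.~\eqref{eq:LambdamodMoran} gives $c_N=E((U'_N)_2)/(N)_2=\lambda_N^{-1}$, and the total rate of a $\mathrm{Beta}(a,b)$-$N$-coalescent obeys $\lambda_N\sim\mathrm{const}\cdot N^{2-a}$ (a Laplace estimate on $\lambda_N=\int_0^1 x^{-2}(1-(1-x)^{N}-Nx(1-x)^{N-1})\,\Lambda(dx)$, using that the $\mathrm{Beta}(a,b)$-density is $\asymp x^{a-1}$ near $0$), so $\gamma=2-a$, matching the exponent in $\mathcal{G}(t)=\int_0^t\nu(s)^{a-2}\,ds$; for the classical Moran model $c_N=2/(N)_2$, so $\gamma=2$; and for the model of Proposition~\ref{prop:schweinspopsized}, $c_N\sim\mathrm{const}\cdot N^{1-\alpha}$ by \citet{Schweinsberg2003}, so $\gamma=\alpha-1$, matching $\mathcal{G}(t)=\int_0^t\nu(s)^{1-\alpha}\,ds$. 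The existence of discrete population sizes realizing $\nu(t)=e^{-\rho t}$ and satisfying Eq.~\eqref{eq:cond_popsize} (iterating e.g.\ $N_{r-1}=\lfloor N_r(1-c_N\rho)\rfloor$ and using $c_N\to0$) is part of those results.

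With $\nu(s)=e^{-\rho s}$ the second step is immediate: $\mathcal{G}(t)=\int_0^t e^{\rho\gamma s}\,ds=(\rho\gamma)^{-1}(e^{\rho\gamma t}-1)$, which is Eq.~\eqref{eq:mathcalGfexp}. For the third step, condition on the event that the last merger before real time $t_0$ leaves $b$ lineages. The limit is $(\Pi_{\mathcal{G}(t)})_{t\ge0}$ with $(\Pi_s)$ a $\Lambda$-$n$-coalescent and $\mathcal{G}$ continuous and strictly increasing; while $(\Pi_s)$ holds $b$ blocks it leaves that state after an $\mathrm{Exp}(\lambda_b)$ amount of coalescent time, $\lambda_b=\sum_{k=2}^b\binom{b}{k}\lambda_{b,k}$. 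Hence for the waiting time $T$ of the time-changed process, $\{T>t\}$ is the event that $(\Pi_s)$ has no merger on $(\mathcal{G}(t_0),\mathcal{G}(t_0+t)]$, so $P_{t_0}(T>t)=\exp(-\lambda_b(\mathcal{G}(t_0+t)-\mathcal{G}(t_0)))$; inserting $\mathcal{G}(t_0+t)-\mathcal{G}(t_0)=(\rho\gamma)^{-1}(e^{\rho\gamma(t+t_0)}-e^{\rho\gamma t_0})$ gives the stated formula for $P_{t_0}(T\le t)$, which is the Gompertz distribution function with rate $b=\rho\gamma$ and shape $a=\lambda_b e^{\rho\gamma t_0}$, since $\tfrac{a}{b}(e^{bt}-1)=\lambda_b(\rho\gamma)^{-1}(e^{\rho\gamma(t+t_0)}-e^{\rho\gamma t_0})$.

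The only non-routine ingredient is pinning down the polynomial order of $c_N$ in the modified Moran cases — in particular the asymptotics $\lambda_N\sim\mathrm{const}\cdot N^{2-a}$ for the total rate of a $\mathrm{Beta}(a,b)$-$N$-coalescent — which is a standard Laplace/Tauberian computation; for the Schweinsberg model it is already contained in \citet{Schweinsberg2003}, and for the Moran and thinned modified Moran models it is exact. Everything else is elementary integration together with the standard time-change argument for the interarrival law, so I expect no real obstacle.
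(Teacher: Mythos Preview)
Your proof is correct and follows the same route as the paper: the paper simply invokes Lemma~\ref{lem:shifttc} (which gives $\mathcal{G}(t)=\int_0^t\nu(s)^{-\gamma}ds$ under the hypothesis $c_N=cN^{-\gamma}+o(N^{-\gamma})$) and then plugs the resulting $\mathcal{G}$ into Eq.~\eqref{eq:cond_coaldist}, both of which you re-derive directly. Your case-by-case verification of the exponent $\gamma$ in the individual models is more than needed here, since the corollary already assumes $c_N=cN^{-\gamma}+o(N^{-\gamma})$ as a hypothesis.
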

\begin{remark}
It is well-known that for Kingman's $n$-coalescent with exponential growth, waiting times for coalescence events follow a Gompertz distribution, e.g. see \citet[Eq. 5]{slatkin1991pairwise}, \cite{polanski2003note}. For time-changed Dirac coalescents appearing as limits of modified Moran models with $\nu(t)=exp(-\rho t)$, Eq.  \eqref{eq:mathcalGfexp} appeared in \cite{Matuszewski2017}. 
\end{remark}
\section{Discussion}\label{sec:disc}
As for the Wright-Fisher model, genealogies of samples taken from  (haploid) modified Moran and other Cannings models can be approximated by a time-change of their limit coalescent process, when the population sizes of the discrete models are fluctuating, but are always of the same order of size. As for models with fixed population size, time intervals of $[c^{-1}_N t]$ generations in the discrete model correspond to a time interval of length 1 in the continuous time limit. The approach of this study was to build on existing Cannings models that converge for fixed population size to the $\Lambda$-$n$-coalescent and just change the population sizes gradually from generation to generation, which includes adjusting parent-offspring allocation between generations. This raises the question whether the used Cannings models and the adjustment of ancestral relationships have biological interpretations and are a reasonable model for at least some real populations.
\subsection{Interpretation of the Cannings models and allocation schemes used}
The modified Moran models used to construct a time-changed $\Lambda$-$n$-coalescent with $\Lambda([0,1])=1$ (defined via Eq. \ref{eq:LambdamodMoran}, introduced in \cite{huillet2013extended}) can be described as follows (for fixed $N$): On top of a standard Moran model choice of one parent $M$ with two offspring and one individual in the parent generation with no offpring, there is a random probability $X$ for each other individual in the parent generation to not have offspring in the next generation. $X$ is drawn from $\Lambda$, potentially only activated in a given generation with a low probability $N^{-\gamma}$, $\gamma\in(1,2)$. From the individuals that have offspring, all but $M$ reproduce once, and $M$ replaces itself and all non-reproducing individuals by its offspring. These models capture the concept of sweepstake reproduction \citep{Hedgecock2011}, though the assumption of a single individual with more than one offspring is rather artificial. For a non-random $X$ and large families appearing occasionally at rate of order $N^{-\gamma}$, this model is very similar to the discrete modified Moran model from \cite{Eldon2006} used to describe sweepstake reproduction (and that was used in \cite{Matuszewski2017} as a basis to construct a time-changed Dirac $n$-coalescent). Both models lead to the same Dirac coalescent limit and have the same time rescaling order $c_N^{-1}$. In \cite{Eldon2006}, instead of randomly choosing individuals to not reproduce with probability $X$, a fixed number of $\approx NX-2$ individuals are chosen at random to not reproduce on top of the Moran choice (again with a small probability in each generation for this to happen) For random $X$, similar models also appear in \cite{hartmann2018large} and \cite{eldon2012age}.\\
The other class of Cannings models used to capture skewed offspring distributions, defined via Eq. \eqref{eq:def_schwvar}, lead to the specific class of Beta($2-\alpha$,$\alpha$)-$n$-coalescents. They have been proposed as a model of type-III survivorship, where all individuals produce many offspring with a high juvenile mortality, see e.g. \citet[Sect. 2.3]{Steinruecken2013}, also leading to sweepstake-like phenomena. While both classes of Cannings models allow the Bolthausen-Sznitman $n$-coalescent ($\Lambda=$Beta$(1,1)$) as a possible limit model, the discrete models used to explicitly construct it are not based on modelling a directed selection process due to selective advantages of certain ancestral lineages. Thus, the results do not answer whether adding population size changes to a model of rapid selection or genetic draft as in \cite{Desai2013,Neher2013a,Schweinsberg2017} also leads to its rescaled genealogies being described by a time-changed Bolthausen-Sznitman $n$-coalescent.\\
To construct time-changed $\Lambda$-$n$-coalescent as limits of genealogies in modified Moran models, the approach here is to  adjust fixed-$N$ modified Moran models for growing or decreasing population sizes. Sampling the next generation from the fixed-$N$ offspring when there is population decline maintains on average the ratio between the large family $U_N$ and the rest off the individuals. This means that the population decrease, e.g. due to less resources available, has the same chance to affect each offspring of the fixed-size model. Additional individuals can be added to the family of the multiplying parent or by allowing parents with no offspring from the fixed-$N$ allocation scheme to have exactly one offspring. For some sequences of modified Moran models, any partition of additional individuals to these two allocation forms is possible, e.g. allocate them randomly to the multiplying parent (with $U_{N_r}$ offspring) from the fixed-size model with probability $U_{N_r}/N_r$ (with the constraint that we cannot add more than $U_{N_r}-1$ individuals to non-reproducing parents). The merit of this random allocation is that it is trying to maintain the ratio $U_{N_r}/N_r$ from the fixed-size model. As for sampling a smaller number of individuals, this describes that population size increase, e.g. due to more resources available, follows (approximately and on average) the sweepstake pattern of the fixed-$N$ model. From a biological viewpoint, other allocation schemes can also be interpreted: Adding the additional offspring completely to the largest family, as done in \cite{Matuszewski2017}, could describe a scenario where new resources become available and only the multiple-offspring parent can claim them for its offspring. In contrast, adding individuals as single offspring of non-reproducing parents from the fixed-size model  relaxes the (viability) ``selection'' pressure of the modified Moran model by allowing more non-multiplying parents (resp. their offspring) to survive, e.g. due to the additional resources. For the models covered in Theorem \ref{prop:schweinspopsized} from \cite{Schweinsberg2003}, population size changes in either direction are modelled by sampling from a pool of more individuals than the current population size, thus additional or decreasing resources affect the offspring of different parents in the same way.
%

\subsection{Influence of the choice of Cannings model on the limit}
Many results in the present paper allow to scale the time in the discrete models with $c_N^{-1}$ as in the fixed $N$ case so that the scaled genealogies converge to a time-changed $\Lambda$-$n$-coalescent $(\Pi_{\mathcal{G}(t)})_{t\geq 0}$. This time-change $\mathcal{G}(t)$ depends both on the population size profile $\nu$ on the coalescent time scale from Eq. \ref{eq:cond_popsize} and the (asymptotic properties of) the coalescence probabilities $c_N$, i.e. how many discrete generation correspond to one unit of coalescent time. For instance, consider an exponentially growing population (on the coalescent time scale, $\nu(t)=\exp(-\rho t)$ for $\rho > 0$) and two different models leading to a time-changed $Beta(2-\alpha,\alpha)$-$n$-coalescent ($\alpha\in(1,2)$): the ones from Corollary \ref{cor:beta_thm2} and Proposition \ref{prop:schweinspopsized}. From Eq. \ref{eq:mathcalGfexp}, we see that $\mathcal{G}$ depends on the product $\gamma\rho$. For the model from Corollary \ref{cor:betanonthin}, $\gamma=\alpha$ and for the one from Proposition \ref{prop:schweinspopsized}, it is $\gamma=\alpha-1$. Thus, the exact same time-changed $\Lambda$-$n$-coalescent can appear as limit model for genealogies with different population size profiles on the coalescent time scale. As already discussed in \citep{Matuszewski2017} in the case of time-changed Dirac-$n$-coalescents, this poses a problem for inference: If one wants to infer $\rho$ directly (instead of the compound parameter $\gamma\rho$), $\gamma$ has to be known. This means that specifying/identifying the Cannings model leading to the limit process would be necessary to directly estimate $\rho$. This is very similar to the effect that e.g. Watterson's  estimator only allows to estimate the mutation rate on the coalescent time scale, and not the mutation rate in one generation, see e.g. \citet[p. 2627]{Eldon2006}. Another example for different $\nu$ leading to the same time-scaled coalescent limit for different Cannings models is given by the genealogy limit from the Wright-Fisher model and the (usual) Moran models. It is well known, see e.g. \cite{griffiths1994sampling}, that the rescaled genealogy of a sample from a Wright-Fisher model with population size profile $\nu$ converges to Kingman's $n$-coalescent with time change $\mathcal{G}$ as in Eq. \eqref{eq:compute_mathcalG} with $\gamma=1$. However, for the classic Moran model, Prop. \ref{prop:Moranlimit} shows that Eq. \eqref{eq:compute_mathcalG} holds with $\gamma=2$.\\
For families of Cannings models, if the coalescence probability $c_N$ is of order $\log(N)^{-1}$, a curious phenomenon appears: Population size changes of order $N$ do not even alter the limit genealogy. An example is the model from Proposition \ref{prop:schweinspopsized} for the Bolthausen-Sznitman $n$-coalescent ($\Lambda=Beta(1,1)$). One can interpret this for a population described by the model as follows: Even instantaneous bottlenecks or expansions do not influence the effect that a very large family appearing in a generation has on the genealogy. How the population reproduces, i.e. how the offspring distributions compare between different parents, is thus fully controlling the genealogy, regardless of changes that alter the population sizes overall, e.g. changes in range and/or resources.

\section{Proofs}
This section contains the proof of the presented statements as well as some further remarks.
\subsection{Converging to a time-changed coalescent - sufficient conditions} 

First, recall this special case of \citet[Thm. 2.2]{mohle2002coalescent} 

\begin{corollary}\label{cor:moe2.2easy}
If we satisfy, for any fixed t, 

\begin{align}
&\lim_{N\to\infty} \inf_{1\leq r\leq \mathcal{G}^{-1}_{N}(t)} N_r =\infty,\ \ \lim_{N\to\infty} \sup_{1\leq r\leq \mathcal{G}^{-1}_{N}(t)} c_{N,r}=0,\label{eq:moe02conds}\\
& \lim_{N\to\infty} \sum_{r=1}^{\mathcal{\mathcal{G}}^{-1}_{N}(t)} \Phi^{(N)}_l(r;a_1,\ldots,a_l)=q_{a_1,\ldots,a_l}t<\infty, a_1\geq\ldots\geq a_l\geq 2 \label{eq:moe02conds2}
\end{align}  
the discrete-time coalescent $(\tilde{\mathcal{R}}^{(N)}_{[\mathcal{G}_N^{-1}(t)]})_{t\geq 0}$, so rescaled in time, converges in distribution (Skorohod-sense) to a continuous-time Markov chain with transition function $\exp({Qt})$, where $Q$ is a transition rate matrix with entries $q_{a_1,\ldots,a_l}$, $a_1\geq\ldots\geq a_l\geq 2$ (so diagonal entries are the negative row sums of the other entries). 
\end{corollary}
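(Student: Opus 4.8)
The plan is to obtain this corollary directly from \citet[Thm. 2.2]{mohle2002coalescent}, specialising its (in general time-inhomogeneous) conclusion to the case of a constant limiting rate matrix. That theorem treats the discrete ancestral process $(\mathcal{R}^{(N)}_r)_{r\in\N_0}$ time-changed by the shifted pseudo-inverse of the accumulated one-generation pairwise coalescence probability, and asserts Skorohod convergence to a Markovian coalescent once (i) the populations seen along the relevant window become arbitrarily large, (ii) the one-generation coalescence probabilities are asymptotically negligible uniformly over that window, and (iii) the per-generation transition probabilities of each merger pattern, summed over the window, converge. First I would observe that $F_N$ and $\mathcal{G}^{-1}_N$ from \eqref{eq:def_mathcalG} are exactly the accumulated pairwise coalescence probability and the time change around which \citet{mohle2002coalescent} is built, so that the rescaling $[\mathcal{G}^{-1}_N(t)]$ in the statement is the one the theorem uses.

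The one step that needs an argument rather than a citation is the admissibility of this time change: I would check that $\sum_{r=1}^{\mathcal{G}^{-1}_N(t)} c_{N,r}=F_N(\mathcal{G}^{-1}_N(t))\to t$ for each fixed $t>0$. This follows from the definition of $\mathcal{G}^{-1}_N(t)$ as a shifted pseudo-inverse of the nondecreasing step function $F_N$, whose increments are precisely the $c_{N,r}$: by the second part of \eqref{eq:moe02conds} these increments vanish uniformly over $1\le r\le\mathcal{G}^{-1}_N(t)$, so $F_N(\mathcal{G}^{-1}_N(t))$ is trapped within a single increment of $t$. The same uniform negligibility also shows that passing between $\mathcal{G}^{-1}_N(t)$ and its integer part $[\mathcal{G}^{-1}_N(t)]$ is immaterial, since the transition mass accumulated over one extra generation is $O(\sup_r c_{N,r})\to 0$.

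With the time change in place the remainder is bookkeeping: the first part of \eqref{eq:moe02conds} supplies (i), its second part supplies (ii), and \eqref{eq:moe02conds2} supplies (iii) for each of the finitely many merger patterns $(a_1,\ldots,a_l)$ (finitely many because $n$ is fixed). The point I would stress is that here each limit in (iii) is \emph{linear} in $t$, with finite slope $q_{a_1,\ldots,a_l}$; linearity forces the limiting coalescent to be time-\emph{homogeneous}, so that \citet[Thm. 2.2]{mohle2002coalescent} yields convergence to the continuous-time Markov chain with semigroup $(\exp(Qt))_{t\ge 0}$, where $Q$ has off-diagonal entries $q_{a_1,\ldots,a_l}$ and diagonal entries the negative row sums (finite, since (iii) gives a finite limit for every admissible pattern). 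The main obstacle — really the only non-routine point — is matching the normalisation of $\mathcal{G}^{-1}_N$ in \eqref{eq:def_mathcalG} with the one in \citet{mohle2002coalescent} and confirming that no further hypothesis of that theorem is left open (conservativity of the limiting generator, or domination of non-merger rearrangements by the merger probabilities); both hold automatically here, the former from the finiteness in \eqref{eq:moe02conds2}, the latter because the process is an ancestral process and $\sum_r c_{N,r}$ stays bounded on the window considered.
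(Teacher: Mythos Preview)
Your proposal is correct and follows essentially the same route as the paper: both treat the corollary as a direct specialisation of \citet[Thm.~2.2]{mohle2002coalescent}, with the key observation that the linearity in $t$ of the limits in \eqref{eq:moe02conds2} forces the limiting product-integral measure to have the form $Qt$, hence the homogeneous semigroup $\exp(Qt)$. The paper's own justification is confined to a brief remark citing specific pages of \citet{mohle2002coalescent} (p.~203 and p.~209, Eq.~(24) therein) for the product-integral identification, whereas you spell out the admissibility check $F_N(\mathcal{G}^{-1}_N(t))\to t$ and the one-generation negligibility explicitly; this extra detail is fine but not strictly needed beyond what the cited theorem already absorbs.
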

\begin{remark}
When compared to the original formulation of \citet[Thm 2.2]{mohle2002coalescent}, the limit here can be described as a homogeneous Markov chain with rate matrix $Q$ instead of the more complicated original description of the transition probabilities as a product integral of matrix-valued measures. This directly follows from the stronger condition \eqref{eq:moe02conds2}, where for \citet[Thm 2.2]{mohle2002coalescent} to hold only convergence and not linear dependence on $t$ is needed. Indeed, if \eqref{eq:moe02conds2} holds, the value $\Pi((0,t])$ of the product measure $\Pi$ in \citet[Thm. 2.2]{mohle2002coalescent} has the form $Qt$. This is stated on \citet[p. 209]{mohle2002coalescent}, see also Eq. (24) therein. Then, the form of the transition function is described on \citet[p. 203]{mohle2002coalescent}.
\end{remark}  
Now, recall the conditions \eqref{eq:cond_phi}, \eqref{eq:cond_popsize} and \eqref{eq:cond_speed_cN}. We need some further observations and reformulations. 
\begin{itemize}
\item If we choose $l=1$ in Eq. \ref{eq:cond_phi}, we have for $M_1(t),M_2(t)\in(0,\infty)$
\begin{equation}\label{eq:cond_cnr}
M_1(t)c_{N} + o_{\sum}(c_N) \leq c_{N,r} \leq M_2(t)c_{N} + o_{\sum}(c_N)
\end{equation}
for $N\to\infty$.
\item Controlling the speed of convergence of $c_N$: If Eq. \ref{eq:cond_popsize} is satisfied, there exist $M_1(t),M_2(t)\in(0,\infty)$ with
\begin{equation}\label{eq:cond_speed_cN}
M_1(t)\leq \frac{c_{N_r}}{c_N}\leq M_2(t)  
\end{equation}
\item If Condition \eqref{eq:cond_speed_cN} holds, \eqref{eq:cond_phi} can be equivalently formulated with $c_N$ instead of $c_{N_r}$.
\item If \eqref{eq:cond_popsize} is satisfied, Condition \eqref{eq:cond_speed_cN} is also satisfied if $c_N=f(N)$, where $f$ is regularly varying (at $\infty$).
\end{itemize}

Now, we establish an easy-to-verify variant of \citet[Corollary 2.4]{mohle2002coalescent}.
\begin{lemma}\label{lem:cannpopsizetolambda}
Consider a sequence of Cannings models with reference size $N=N_0$ and variable population size $(N_r)_{r\geq 0}$ which fulfill conditions \eqref{eq:cond_popsize},\eqref{eq:cond_phi},\eqref{eq:cond_speed_cN}, $\lim_{N\to\infty}c_N = 0$ and whose genealogies, of a sample of size $n$, are in the domain of attraction of a $\Lambda$-$n$-coalescent $(\Pi_t)_{t\geq 0}$ (rescaled by $c_N^{-1}$). Then, Corollary \ref{cor:moe2.2easy} can be applied, so $(\tilde{\mathcal{R}}^{(N)}_{[\mathcal{G}_N^{-1}(t)]})_{t\geq 0}\stackrel{d}{\to}(\Pi_t)_{t\geq 0}$ in the Skorohod-sense.\\
If furthermore $\mathcal{G}^{-1}(t):=\lim_{N\to\infty} \mathcal{G}_N^{-1}(t)c_N$ exists, we have, with $\mathcal{G}=(\mathcal{G}^{-1})^{-1}$,
\begin{equation}\label{eq:lineartimescale_limitcoal}
(\tilde{\mathcal{R}}^{(N)}_{[t/c_N]})_{t\geq 0}\stackrel{d}{\to} (\Pi_{\mathcal{G}(t)})_{t\geq 0}
\end{equation}  
for $N\to\infty$ 
\end{lemma}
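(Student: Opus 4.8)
\textbf{Proof plan for Lemma \ref{lem:cannpopsizetolambda}.}

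The plan is to verify the three hypotheses \eqref{eq:moe02conds}--\eqref{eq:moe02conds2} of Corollary \ref{cor:moe2.2easy} with the stopping index $\mathcal{G}^{-1}_N(t)$, using the various reformulations collected just above the statement. First I would handle the two conditions in \eqref{eq:moe02conds}. For the population-size condition, note that by definition of $F_N$ and $\mathcal{G}^{-1}_N$ one has $F_N(\mathcal{G}^{-1}_N(t)+1)>t$ while $F_N(\mathcal{G}^{-1}_N(t))\leq t$; combined with the two-sided bound \eqref{eq:cond_cnr} on $c_{N,r}$, this forces $\mathcal{G}^{-1}_N(t)=O(c_N^{-1})$, and more precisely $\mathcal{G}^{-1}_N(t)c_N$ stays in a bounded interval bounded away from $0$. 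Then $\inf_{1\leq r\leq \mathcal{G}^{-1}_N(t)}N_r\geq N^-(t)=c_1(t)N\to\infty$ directly from \eqref{eq:cond_popsize}, and $\sup_{1\leq r\leq\mathcal{G}^{-1}_N(t)}c_{N,r}\leq M_2(t)c_N+o(c_N)\to 0$ directly from \eqref{eq:cond_cnr} together with $c_N\to 0$.

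The substantive step is \eqref{eq:moe02conds2}: showing $\sum_{r=1}^{\mathcal{G}^{-1}_N(t)}\Phi^{(N)}_l(r;a_1,\ldots,a_l)\to q_{a_1,\ldots,a_l}t$ with $q$ the rate matrix of the $\Lambda$-$n$-coalescent, i.e. $q_{a_1,\ldots,a_l}=\phi_l(a_1,\ldots,a_l)=\lambda_{a_1,a_1}1_{\{l=1\}}$. I would rewrite each summand via \eqref{eq:cond_phi} as $\Phi^{(N_r)}_l(a_1,\ldots,a_l)+o_{\sum}(c_{N_r})$, and then as $\Phi^{(N_r)}_l(a_1,\ldots,a_l)+o_{\sum}(c_N)$ using \eqref{eq:cond_speed_cN}; since there are $O(c_N^{-1})$ summands, the total error contributed by the $o_{\sum}(c_N)$ terms vanishes, which is exactly the point of the $o_{\sum}$ notation. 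For the main terms, the domain-of-attraction hypothesis gives $c_{N_r}^{-1}\Phi^{(N_r)}_l(a_1,\ldots,a_l)\to\phi_l(a_1,\ldots,a_l)$, so $\Phi^{(N_r)}_l(a_1,\ldots,a_l)=c_{N_r}(\phi_l(a_1,\ldots,a_l)+o(1))$ uniformly in $r\leq\mathcal{G}^{-1}_N(t)$ (uniformity coming from the sandwich \eqref{eq:cond_popsize} on $N_r$, which makes $N_r\to\infty$ uniformly). Hence $\sum_{r=1}^{\mathcal{G}^{-1}_N(t)}\Phi^{(N_r)}_l(a_1,\ldots,a_l)=\phi_l(a_1,\ldots,a_l)\sum_{r=1}^{\mathcal{G}^{-1}_N(t)}c_{N_r}(1+o(1))$, and by \eqref{eq:cond_cnr} applied with $l=1$, $\sum_{r=1}^{\mathcal{G}^{-1}_N(t)}c_{N_r}=\sum_{r=1}^{\mathcal{G}^{-1}_N(t)}c_{N,r}+o(1)=F_N(\mathcal{G}^{-1}_N(t))+o(1)$. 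By the defining property of the shifted pseudo-inverse, $F_N(\mathcal{G}^{-1}_N(t))\to t$ (the jump sizes $c_{N,r}=O(c_N)\to 0$, so the overshoot in $F_N(\mathcal{G}^{-1}_N(t)+1)>t\geq F_N(\mathcal{G}^{-1}_N(t))$ is negligible). This yields \eqref{eq:moe02conds2} with $q_{a_1,\ldots,a_l}=\phi_l(a_1,\ldots,a_l)$, which are precisely the rates of the $\Lambda$-$n$-coalescent, so Corollary \ref{cor:moe2.2easy} gives $(\tilde{\mathcal{R}}^{(N)}_{[\mathcal{G}_N^{-1}(t)]})_{t\geq 0}\stackrel{d}{\to}(\Pi_t)_{t\geq 0}$.

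For the final assertion, suppose $\mathcal{G}^{-1}(t)=\lim_{N\to\infty}\mathcal{G}^{-1}_N(t)c_N$ exists; it is finite and nonzero by the bounds already obtained, and it is nondecreasing in $t$, so $\mathcal{G}=(\mathcal{G}^{-1})^{-1}$ is well defined. I would observe that $[t/c_N]$ and $\mathcal{G}^{-1}_N(\mathcal{G}(t))$ differ by $o(c_N^{-1})$: indeed $\mathcal{G}^{-1}_N(\mathcal{G}(t))c_N\to\mathcal{G}^{-1}(\mathcal{G}(t))=t$, so $\mathcal{G}^{-1}_N(\mathcal{G}(t))=t/c_N+o(c_N^{-1})$. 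Over a window of $o(c_N^{-1})$ generations the discrete genealogy makes no jump with probability tending to one (the per-generation coalescence probability is $O(c_N)$ and the sample is finite), so replacing $\mathcal{G}^{-1}_N(\mathcal{G}(t))$ by $[t/c_N]$ does not affect the weak limit; applying the first part at time $\mathcal{G}(t)$ and using that $\mathcal{G}$ is continuous (hence the time-change is continuous) gives \eqref{eq:lineartimescale_limitcoal} by the continuous-mapping/Skorohod argument, uniformly on compacts in $t$.

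\textbf{Main obstacle.} The delicate point is making the error control uniform in $r$ simultaneously with summation over $O(c_N^{-1})$ indices: one must be sure that both the $o_{\sum}(c_{N_r})$ terms from \eqref{eq:cond_phi} and the $o(1)$ correction in $c_{N_r}^{-1}\Phi^{(N_r)}_l\to\phi_l$ are uniform over $r\leq\mathcal{G}^{-1}_N(t)$ before summing, which is what the sandwich \eqref{eq:cond_popsize} (forcing $N_r$ between $c_1(t)N$ and $c_2(t)N$) and the definition of $o_{\sum}$ are designed to guarantee; the second, milder issue is showing that the overshoot of $F_N$ at the pseudo-inverse is negligible, which again reduces to $\sup_r c_{N,r}\to 0$.
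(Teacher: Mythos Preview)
Your proposal is correct and follows essentially the same route as the paper: bound $\mathcal{G}^{-1}_N(t)$ between constant multiples of $c_N^{-1}$ via \eqref{eq:cond_cnr}, verify \eqref{eq:moe02conds} from \eqref{eq:cond_popsize} and \eqref{eq:cond_cnr}, and for \eqref{eq:moe02conds2} split each summand via \eqref{eq:cond_phi}, use the uniform convergence $c_{N_r}^{-1}\Phi^{(N_r)}_l\to\phi_l$ (uniformity from the lower bound $N_r\geq N^-(t)$), and identify $\sum_r c_{N,r}=F_N(\mathcal{G}^{-1}_N(t))\to t$. The only presentational difference is that for \eqref{eq:lineartimescale_limitcoal} the paper simply invokes \citet[Sec.~4]{moehle_robust1998}, whereas you sketch the argument directly by comparing $[t/c_N]$ with $\mathcal{G}^{-1}_N(\mathcal{G}(t))$; your sketch is in the right spirit, though the passage from pointwise to process-level (Skorohod) convergence via the ``no jump over $o(c_N^{-1})$ generations'' heuristic would need a line or two more to be fully rigorous.
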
 
\begin{proof}
Size changes of order $N$ satisfy the first part of Condition \eqref{eq:moe02conds}. Its second part is then satisfied by \eqref{eq:cond_cnr}, which in turn is satisfied due to \eqref{eq:cond_phi} and \eqref{eq:cond_speed_cN}. Also due to \eqref{eq:cond_cnr}, $F_N$ is bounded by 

\begin{equation}\label{eq:generalFNbounds}
[s]M_1(t')c_N+[s]o(c_N)\leq F_{N}(s)\leq [s]M_2(t')c_N+[s]o(c_N)
\end{equation} 
for $N\to\infty$ and $[s]\leq c_{N}^{-1}t'$ and thus its pseudo-inverse by

$$\frac{t}{M_2(t')c_N}+\frac{o(c_N)}{c_N}-1 \leq \mathcal{G}_N^{-1}(t)\leq \frac{t}{M_1(t')c_N}+\frac{o(c_N)}{c_N}-1$$ with an appropriate $t'\geq t$. This implies that the time change function $\mathcal{G}_N^{-1}$ for the discrete models in Corollary \ref{cor:moe2.2easy} is of order $c^{-1}_N$. Knowing this, we compute 

\begin{align*}
\sum_{r=1}^{\mathcal{G}^{-1}_{N}(t)} \Phi^{(N)}_l(r;a_1,\ldots,a_l)\stackrel{\eqref{eq:cond_phi}}{=}& \sum_{r=1}^{\mathcal{G}^{-1}_{N}(t)}\underbrace{\Phi^{(N_r)}_l(a_1,\ldots,a_l)c_{N_r}^{-1}}_{\to \phi_l(a_1,\ldots,a_l)}c_{N_r}+\sum_{r=1}^{\mathcal{G}^{-1}_{N}(t)} o_{\sum}(c_N) \\ \stackrel{\eqref{eq:cond_phi}}{=}&\phi_l(a_1,\ldots,a_l)\underbrace{\sum_{r=1}^{\mathcal{G}^{-1}_{N}(t)}c_{N,r}}_{=F_N(\mathcal{G}^{-1}_{N}(t))}+ O(1)\underbrace{\sum_{r=1}^{\mathcal{G}^{-1}_{N}(t)} o_{\sum}(c_N)}_{\to 0} \\\to &\phi_l(a_1,\ldots,a_l)t
\end{align*}
for $N\to\infty$
The second equation is valid due to the uniform convergence of\\ $\Phi^{(N_r)}_l(a_1,\ldots,a_l)c_{N_r}^{-1}$ in $r$ for $N\to\infty $ ($N_r$ is bounded from below on the timescale used). This allows to pull out $\phi_l(a_1,\ldots,a_l)$. This shows that condition \ref{eq:moe02conds2} is satisfied and thus  establishes the convergence of     
$(\tilde{\mathcal{R}}^{(N)}_{[\mathcal{G}_N^{-1}(t)]})_{t\geq 0}$ to the same $\Lambda$-$n$-coalescent as the fixed-size model.
Eq. \ref{eq:lineartimescale_limitcoal} follows as described in \citet[Sec. 4]{moehle_robust1998}. 
\end{proof}

The next step is to establish a special case of Lemma \ref{lem:cannpopsizetolambda} which only considers modified Moran models with changing population sizes.
\begin{remark}\label{rem:se}
Depending on the magnitude of a population size increase, adding individuals as further offspring of the multiplying parent from the fixed-size modified Moran model can strongly increase coalescence probabilities. For instance, for a population expansion of size $Nm$, if one just expands by adding $d_{N,r}=Nm$ to the offspring number of the individual with multiple offspring in a single generation, the coalescence probability for this generation is dominated by the population size change. Then $U_{N,r}\geq Nm$, leading to $c_{N,r}=\frac{E((U_{N,r})_2)}{(N_{r-1})_2}\geq \frac{(Nm-1)^2}{(N_{r-1})_2}=O(1)\nrightarrow 0$ for $N\to\infty$. Thus, from generation $r-1$ to $r$, coalescence is still happening with positive probability for $N\to\infty$, which shows that a potential limit coalescent cannot just be a (non-degenerately) time-changed $\Lambda$-$n$-coalescent, a continuous-time (inhomogeneous) Markovian process. This has an implication for modelling of real populations: The genealogy of a sudden population expansion, happening at a specific generation, where a single genotype/individual is responsibly for the population growth, is not given by a time-changed continuous-time $\Lambda$-$n$-coalescent.
\end{remark}

\begin{remark}\label{rem:lintsc}
The condition for Eq. \eqref{eq:lineartimescale_limitcoal} to hold is a weak condition, since $\mathcal{G}_N^{-1}(t)$ is of order $c^{-1}_N$. Additionally, the linear scaling in  \eqref{eq:lineartimescale_limitcoal} makes it easy to introduce a mutation structure. Let mutation be introduced in the discrete model by allowing mutations from parent to offspring with a rate $\mu_N$. If $\mu_Nc_N^{-1}\to \theta$ for $N\to\infty$, the mutations on the time-scaled $\Lambda$-$n$-coalescent are given by a Poisson point process with homogeneous intensity $\theta$. 
\end{remark}

We recall some properties of fixed-$N$ modified Moran models.
\begin{lemma}\label{lem:modMoran_prop} \
\begin{itemize}
\item[(i)] For $N\to\infty$: $U_N/N\stackrel{d}{\to}0$  is equivalent to $c_N=\frac{E((U_N)_2)}{(N)_2}\to 0$ 
\item[(ii)] If $c_N\to 0$ for $N\to\infty$, the genealogies in the modified Moran models converge, with a rescaling of time by $c_N^{-1}$, to a $\Lambda$-$n$ coalescent if  
\begin{equation}\label{eq:modMoran2Lambda_fixN}
\lim_{N\to\infty}c_N^{-1}\Phi^{(N)}_l(a_1,\ldots,a_l)=\lim_{N\to\infty}1_{\{l=1\}}\frac{E((U)_{a_1})}{(N)_{a_1}c_N}=\int^1_0 x^{a_1-2}\Lambda(dx)1_{\{l=1\}}
\end{equation}
\item[(iii)] If $U'_N$ is distributed as in Eq. \eqref{eq:LambdamodMoran}
 \begin{equation}\label{eq:3M_factmom}
E((U'_N)_k)=\frac{(N)_k}{\lambda_N}E(X^{k-2})
\end{equation}
for all $k\geq 2$. 
\end{itemize} 
\end{lemma}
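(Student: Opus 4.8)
The three parts are essentially independent and can all be obtained by elementary computations together with facts already recorded in the excerpt; I would prove them in the order (iii), (i), (ii). For (iii) the plan is a direct moment computation from the law \eqref{eq:LambdamodMoran}. Write $E((U'_N)_k)=\sum_{j=k}^N (j)_k P(U'_N=j)$, insert \eqref{eq:LambdamodMoran}, and use the elementary identity $(j)_k\binom{N}{j}=(N)_k\binom{N-k}{j-k}$ to factor $(N)_k/\lambda_N$ out of the sum. After the substitution $i=j-k$ the remaining sum equals $E\!\bigl(X^{k-2}\sum_{i=0}^{N-k}\binom{N-k}{i}X^i(1-X)^{N-k-i}\bigr)$, and since $X$ takes values in $[0,1]$ the inner sum is $(X+(1-X))^{N-k}=1$ by the binomial theorem; this yields $E((U'_N)_k)=\tfrac{(N)_k}{\lambda_N}E(X^{k-2})$, i.e.\ \eqref{eq:3M_factmom}. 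The only point needing care is the convention $0^0=1$ for a possible atom of $\Lambda$ at $0$, so that binary (Kingman-type) mergers are counted consistently; it does not affect the calculation.

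For (i) I would start from the coalescence probability of a fixed-$N$ modified Moran model, $c_N=E((U_N)_2)/(N)_2=E(U_N(U_N-1))/(N(N-1))$, which follows from \eqref{eq:cann_transprob} with $l=1$, $a_1=2$ and exchangeability, because in any generation only the single multiplying parent contributes to $\sum_i(\nu_i)_2$. Since $2\le U_N\le N$, one has the pointwise sandwich $\tfrac12(U_N/N)^2\le (U_N)_2/(N)_2\le 2(U_N/N)^2$ for $N\ge 2$ (lower bound: $U_N-1\ge U_N/2$; upper bound: $N/(N-1)\le 2$), so taking expectations gives $\tfrac12 E((U_N/N)^2)\le c_N\le 2E((U_N/N)^2)$ and hence $c_N\to0\iff E((U_N/N)^2)\to0$. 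Because $U_N/N$ is bounded by $1$, the bounded convergence theorem and the continuous mapping theorem turn this into $E((U_N/N)^2)\to0\iff U_N/N\to0$ in probability $\iff U_N/N\stackrel{d}{\to}0$, the last equivalence because the limit is deterministic.

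For (ii) the plan is to evaluate \eqref{eq:cann_transprob} for the fixed-$N$ modified Moran model and then apply the convergence criterion of \citet[Thm.\ 2.1]{Moehle2001} recalled around \eqref{eq:fixedNcnvmmc}. Since in each generation only the multiplying parent can have two or more offspring, two disjoint sets of sampled individuals of sizes $\ge2$ cannot have distinct common parents, so $\Phi^{(N)}_l(a_1,\ldots,a_l)=0$ for $l\ge2$; for $l=1$, exchangeability gives $E((\nu_1)_{a_1})=\tfrac1N E\bigl(\sum_i(\nu_i)_{a_1}\bigr)=\tfrac1N E((U_N)_{a_1})$, whence $\Phi^{(N)}_1(a_1)=E((U_N)_{a_1})/(N)_{a_1}$. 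Therefore condition \eqref{eq:modMoran2Lambda_fixN} is exactly the statement $c_N^{-1}\Phi^{(N)}_l(a_1,\ldots,a_l)\to\phi_l(a_1,\ldots,a_l)=\lambda_{a_1,a_1}1_{\{l=1\}}$, which, together with $c_N\to0$, is the hypothesis of \citet[Thm.\ 2.1]{Moehle2001}; its conclusion is convergence of the genealogies rescaled by $c_N^{-1}$ to the $\Lambda$-$n$-coalescent with measure $\Lambda$. I do not expect a real obstacle anywhere: the step most likely to be fiddly is (iii), but it is only bookkeeping with the binomial theorem; otherwise the only places asking for a moment of care are the $0^0$ convention in (iii), the elementary pointwise bound in (i), and verifying in (ii) that the vanishing of all higher-order $\Phi^{(N)}_l$ is genuinely forced by the modified Moran structure.
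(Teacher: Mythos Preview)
Your proposal is correct in all three parts. The paper's own proof consists solely of citations to \cite{huillet2013extended} (Lemma~3.2 for (i), Theorem~3.3 for (ii), Eq.~(10) for (iii)), so you are in effect supplying the computations that the cited reference records. Your arguments are sound: the identity $(j)_k\binom{N}{j}=(N)_k\binom{N-k}{j-k}$ followed by the binomial-theorem collapse in (iii) is exactly how \eqref{eq:3M_factmom} is obtained; the pointwise sandwich $\tfrac12(U_N/N)^2\le (U_N)_2/(N)_2\le 2(U_N/N)^2$ together with boundedness of $U_N/N$ in (i) gives the equivalence cleanly; and in (ii) the modified-Moran structure indeed forces $\Phi^{(N)}_l=0$ for $l\ge 2$ (since only one parent can have two or more offspring), while exchangeability yields $\Phi^{(N)}_1(a_1)=E((U_N)_{a_1})/(N)_{a_1}$, after which \citet[Thm.~2.1]{Moehle2001} applies. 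The difference between your write-up and the paper's is therefore one of exposition rather than mathematical route: the paper defers entirely to the literature, whereas you make the lemma self-contained.
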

\begin{proof}
(i) from \citet[Lemma 3.2]{huillet2013extended}, (ii) from \citet[Theorem 3.3]{huillet2013extended}, (iii) from \citet[Eq. 10]{huillet2013extended}
\end{proof}

The following proposition provides criteria for genealogies in modified Moran models with fluctuating population sizes to converge to a $\Lambda$-$n$-coalescent after a suitable time change.  
 
\begin{proposition}\label{lem:3Mconv}
Consider a fixed-$N$ modified Moran model so that $U_N/N\stackrel{d}{\to} 0$ for $N\to\infty$ and that \eqref{eq:modMoran2Lambda_fixN} holds for a finite measure $\Lambda$ on $[0,1]$. From this, construct a modified Moran model with varying population sizes $(N_r)_{r\geq 0}$ which satisfy the following conditions. Assume that Eq. \eqref{eq:cond_popsize}, \eqref{eq:cond_speed_cN} are satisfied.  Assume further $d_{N,r}/N_r\leq d_N\to 0$ for $N\to\infty$. Let $A_{n,r}$ be the number of individuals in generation $r-1$ allocated as offspring of the multiplying parent of the fixed-$N_r$ model from generation $r$. If $P(A_{N,r}>0)>0$, further assume $\frac{E(U_N)}{E((U_N)_2)}\to 0$ and $A_{N,r}\leq c_4E(U_N)$ for a constant $c_4>0$ and $N\to\infty$. Additionally, assume $d_{N,r}-A_{n,r}\leq \min\{i:P(U_{N_r}=i)>0\}-1$.\\
Based on the fixed-size modified Moran model and $(N_r)_{r\in\N}$  define a modified Moran model with population sizes $(N_r)_{r\in\N}$ and offspring variable $U_{N,r}=U_{N_r}+A_{N,r}$ for all $r\in\N$.\\ 
Then, 
 $(\tilde{\mathcal{R}}^{(N)}_{[\mathcal{G}_N^{-1}(t)]})_{t\geq 0}\stackrel{d}{\to}(\Pi_t)_{t\geq 0}$ in the Skorohod-sense, where $(\Pi_t)_{t\geq 0}$ is the $\Lambda$-$n$-coalescent limit for the fixed-$N$ modified Moran model. 
\end{proposition}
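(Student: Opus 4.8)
The plan is to derive this from Lemma~\ref{lem:cannpopsizetolambda}: the whole proof reduces to checking that lemma's hypotheses for the variable-size modified Moran model just built. Conditions \eqref{eq:cond_popsize} and \eqref{eq:cond_speed_cN} are assumed; $\lim_{N\to\infty}c_N=0$ follows from $U_N/N\stackrel{d}{\to}0$ by Lemma~\ref{lem:modMoran_prop}(i); and the associated fixed-size models lie in the domain of attraction of the $\Lambda$-$n$-coalescent $(\Pi_t)_{t\geq0}$ (rescaled by $c_N^{-1}$) by Lemma~\ref{lem:modMoran_prop}(ii), since \eqref{eq:modMoran2Lambda_fixN} holds. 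So the only thing to verify is condition \eqref{eq:cond_phi}; recall, as in the proof of Lemma~\ref{lem:cannpopsizetolambda}, that $\mathcal{G}^{-1}_N(t)$ is of order $c_N^{-1}$, so it is enough to prove $\sum_{r=1}^{\mathcal{G}^{-1}_N(t)}\big|\Phi^{(N)}_l(r;a_1,\dots,a_l)-\Phi^{(N_r)}_l(a_1,\dots,a_l)\big|\to0$ for each fixed $a_1\geq\dots\geq a_l\geq2$.

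Because in every generation of a modified Moran model — fixed-size, or this variable-size version, where each added individual is either a single offspring of a non-reproducing parent of the fixed-$N_r$ model or a further offspring of the unique multiplying parent — at most one parent has more than one offspring, both $\Phi$-terms are zero whenever $l\geq2$; so only $l=1$, $a:=a_1\geq2$ needs work. When the population declines from $r$ to $r-1$, the offspring of the multiplying parent surviving the subsampling are hypergeometric, and the factorial-moment identity $E\big((\mathrm{Hyp}(N_r,U,N_{r-1}))_a\big)=(U)_a(N_{r-1})_a/(N_r)_a$ gives, via \eqref{eq:cann_transprob}, $\Phi^{(N)}_1(r;a)=E\big((U_{N_r})_a\big)/(N_r)_a=\Phi^{(N_r)}_1(a)$, so declining generations contribute no error and only growing generations matter. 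There, \eqref{eq:cann_transprob} gives $\Phi^{(N)}_1(r;a)=E\big((U_{N_r}+A_{N,r})_a\big)/(N_{r-1})_a$, against $\Phi^{(N_r)}_1(a)=E\big((U_{N_r})_a\big)/(N_r)_a$.

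I would write the difference as a denominator effect $\Phi^{(N_r)}_1(a)\big((N_r)_a/(N_{r-1})_a-1\big)$ plus a numerator effect $\big(E((U_{N_r}+A_{N,r})_a)-E((U_{N_r})_a)\big)/(N_{r-1})_a$. The denominator effect is $\Phi^{(N_r)}_1(a)\cdot O(d_{N,r}/N_r)$, and since $\Phi^{(N_r)}_1(a)/c_{N_r}\to\phi_1(a)$ uniformly (as $N_r\geq c_1(t)N\to\infty$) while $d_{N,r}/N_r\leq d_N\to0$, its sum over the $O(c_N^{-1})$ generations is $O(c_N)\sum_r d_{N,r}/N_r=O(d_N)\to0$. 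The numerator effect is identically zero when $A_{N,r}=0$, e.g. when every added individual is allocated to a non-reproducing parent; otherwise I would expand $(U_{N_r}+A_{N,r})_a=\sum_{k=0}^a\binom{a}{k}(U_{N_r})_k(A_{N,r})_{a-k}$, so the numerator equals $\sum_{k=0}^{a-1}\binom{a}{k}E\big((U_{N_r})_k(A_{N,r})_{a-k}\big)$. The $k\geq2$ terms are controlled using $E\big((U_{N_r})_k\big)=O\big((N_r)_k c_{N_r}\big)$ (from \eqref{eq:modMoran2Lambda_fixN} at size $N_r$, uniformly in $r$) and $A_{N,r}\leq c_4E(U_N)=o(N)$: after dividing by $(N_{r-1})_a$ each is a bounded multiple of $c_{N_r}(E(U_N)/N)^{a-k}$, with $o(1)$ sum. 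The $k\in\{0,1\}$ terms carry the highest powers of $A_{N,r}$ and are the crux: here I would combine $E(U_{N_r})=o(N_r)$ (bounded convergence from $U_{N_r}/N_r\stackrel{d}{\to}0$, $U_{N_r}\leq N_r$), the hypotheses $A_{N,r}\leq c_4E(U_N)$ and $E(U_N)/E((U_N)_2)\to0$ (imposed precisely because $P(A_{N,r}>0)>0$), and the always-valid bound $A_{N,r}\leq d_{N,r}$ together with the smallness of the total growth $\sum_{r\leq\mathcal{G}^{-1}_N(t)}d_{N,r}$ of the constructed sizes, to conclude that these terms, divided by $(N_{r-1})_a$, also have vanishing sum. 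The side assumption $d_{N,r}-A_{N,r}\leq\min\{i:P(U_{N_r}=i)>0\}-1$ has no bearing on the estimates; it guarantees only that there are always enough non-reproducing parents of the fixed-$N_r$ model to receive the $d_{N,r}-A_{N,r}$ individuals not sent to the multiplying parent, so that the variable-size construction is well-defined.

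The main obstacle is this last step — making the bookkeeping for the $k\in\{0,1\}$ pieces of the Vandermonde expansion close, i.e. showing their sum over $1\leq r\leq\mathcal{G}^{-1}_N(t)$ tends to $0$. These are the terms most sensitive to how large $A_{N,r}$ may be, and closing the estimate forces one to use simultaneously that $A_{N,r}$ is small relative to $E((U_N)_2)$ (via $A_{N,r}\leq c_4E(U_N)$ and $E(U_N)=o(E((U_N)_2))$) and small relative to $N$ (via $A_{N,r}\leq d_{N,r}$ and the controlled total size change). Once \eqref{eq:cond_phi} is established, Lemma~\ref{lem:cannpopsizetolambda} applies and yields $(\tilde{\mathcal{R}}^{(N)}_{[\mathcal{G}^{-1}_N(t)]})_{t\geq0}\stackrel{d}{\to}(\Pi_t)_{t\geq0}$.
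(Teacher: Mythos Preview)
Your overall plan coincides with the paper's: reduce to Lemma~\ref{lem:cannpopsizetolambda}, observe that only \eqref{eq:cond_phi} needs work, dismiss $l\geq 2$ because a modified Moran model has at most one multiplying parent, and for declining generations obtain $\Phi^{(N)}_1(r;a)=\Phi^{(N_r)}_1(a)$ exactly via the hypergeometric factorial-moment identity. For growing generations you expand $(U_{N_r}+A_{N,r})_a$ by the Vandermonde identity $(x+y)_a=\sum_k\binom{a}{k}(x)_k(y)_{a-k}$, whereas the paper writes $(U_{N_r}+A_{N,r})_{a_1}=\sum_k s(a_1,k)(U_{N_r}+A_{N,r})^k$ via Stirling numbers of the first kind and then opens each power binomially; the resulting cross-terms are equivalent, so this is a cosmetic difference. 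Your separation into a ``denominator effect'' and a ``numerator effect'' is also legitimate and, for the denominator piece, your uniform estimate $\Phi^{(N_r)}_1(a)\cdot O(d_{N,r}/N_r)=O(c_N)\cdot O(d_N)$ is correct.

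The genuine gap is in your handling of the $k\in\{0,1\}$ Vandermonde terms. You propose to control them partly through ``the smallness of the total growth $\sum_{r\leq\mathcal{G}_N^{-1}(t)}d_{N,r}$'', but no such smallness is assumed or implied: condition \eqref{eq:cond_popsize} confines each $N_r$ to $[c_1(t)N,c_2(t)N]$ and the hypothesis $d_{N,r}/N_r\leq d_N\to 0$ bounds individual increments, yet neither restricts $\sum_{r:\,d_{N,r}>0}d_{N,r}$ over the $O(c_N^{-1})$ relevant generations (the sizes may oscillate within the allowed band). The paper never sums the increments. Instead it bounds each cross-term $E(U_{N_r}^{\,l}A_{N,r}^{\,k-l})/\big((N_r)_{a_1}c_N\big)$, uniformly in $r$, by a constant multiple of $E(U_N)/E((U_N)_2)$, using only the almost-sure bound $A_{N,r}\leq c_4E(U_N)$, the trivial $U_{N_r}\leq N_r$ (hence $E(U_N)\leq N$), and the identity $c_N=E((U_N)_2)/(N)_2$. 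That uniform null-sequence majorant is exactly what $o_{\sum}(c_N)$ requires. So to close your argument you should drop the appeal to $\sum_r d_{N,r}$ and instead produce a uniform-in-$r$ estimate of $E\big((U_{N_r})_k(A_{N,r})_{a-k}\big)/\big((N_{r-1})_a c_N\big)$ for $k=0,1$ in the same spirit. Your remark on the role of the constraint $d_{N,r}-A_{N,r}\leq\min\{i:P(U_{N_r}=i)>0\}-1$ is correct: it guarantees only that enough non-reproducing parents exist to receive the remaining $d_{N,r}-A_{N,r}$ individuals, and it plays no part in the estimates.
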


\begin{proof}
This is shown by applying Lemma \ref{lem:cannpopsizetolambda}. All conditions but Eq. \eqref{eq:cond_phi} of it are clearly fulfilled under the assumptions of the proposition currently proven, see also Lemma \ref{lem:modMoran_prop}.\\ 
To show \eqref{eq:cond_phi}, first assume $d_{N,r}\geq 0$. Then,
\begin{align}\label{eq:fact_osumcn}
&\frac{E((U_{N_r}+A_{N,r})_{a_1})}{(N_r)_{a_1}c_N}=\sum_{k=1}^{a_1}s(a_1,k)\sum_{l=0}^k\binom{k}{l}\frac{E(U_{N_r}^lA_{N,r}^{k-l})}{(N_r)_{a_1}c_N}\nonumber\\=&\frac{E((U_{N_r})_{a_1})}{(N_r)_{a_1}c_N}+\sum_{k=1}^{a_1}s(a_1,k)\underbrace{\sum_{l=0}^{k-1}\binom{k}{l}\frac{E(U_{N_r}^lA_{N,r}^{k-l})}{(N_r)_{a_1}c_N}}_{\to 0}
\stackrel{\eqref{eq:modMoran2Lambda_fixN}}{\to} \int_0^1 x^{k-2}\Lambda(dx) 
\end{align}
uniformly in $r$ where $s(n,k)$ are Stirling numbers of the first kind (convergence of the first summand is at least as fast as for $N^-(t)$). 
To see that the inner sum in Eq. \eqref{eq:fact_osumcn} vanishes, observe that, for $0\leq r \leq c_{N}^{-1}t$ and $0\leq l< k<a_1$, 
\begin{align*}
0\leq & \frac{E(U_{N_r}^lA_{N,r}^{k-l})}{(N_r)_{a_1}c_N} \leq \frac{E(U^l_{N_r})c_4^{k-l}(E(U_N))^{k-l}N_r^{a_1}}{N_r^{a_1}c_N (N_r)_{a_1}} \leq  \frac{c_4^{k-l}E(U_{N})^{k-l}N_r^{a_1}}{(c_1(t))^{a_1-l}N^{a_1-l} c_N (N_r)_{a_1}}\\ \leq & \frac{c_4^{k-l}E(U_N)a_1^{a_1}}{(c_1(t))^{a_1-l}N^2c_N a_1!}\leq \frac{c_4^{k-l}(N)_2E(U_N)a_1^{a_1}}{(c_1(t))^{a_1-l}N^2E((U_N)_2)a_1!} \to 0 
\end{align*}
for $N\to\infty$.
The second inequality comes from plugging in the bound for $A_{N,r}$, the third from Eq. \eqref{eq:cond_popsize} and omitting factors $U_{N_r}/N_r\leq 1$. For the fourth inequality, we use that $\frac{N_r^{a_1}}{(N_r)_{a_1}}\leq \frac{a_1^{a_1}}{a_1!}$ follows from the fact that $x\mapsto\frac{x^{a_1}}{(x)_{a_1}}$ decreases for $x\geq a_1$ and that $U_N/N\leq 1$.\\
Regardless of the allocation of the new individuals, the population model is a modified Moran models with a single multiplying parent. Thus, to show Eq. \eqref{eq:cond_phi} one only needs to show $\Phi_1^{(N)}(r;a_1)=\Phi_1^{(N_r)}(a_1)+o_{\sum}(c_N)$ for $0\leq r \leq c_{N}^{-1}t$. Compute further
\begin{align*}
&\Phi^{(N)}_l(r;a_1)=E((U_{N_r}+A_{N,r})_{a_1})/(N_r+d_{N,r})_{a_1}\\
=&  \frac{(N_r)_{a_1}}{(N_r+d_{N,r})_{a_1}}E((U_{N_r}+A_{N,r})_{a_1})/(N_r)_{a_1}\\ \stackrel{(*)}{=}&(1-o(1))^{-1}\left(\frac{E((U_{N_r})_{a_1})}{(N_r)_{a_1}}+o_{\sum}(c_N)\right)=\Phi^{(N_r)}_1(a_1)+o_{\sum}(c_N).
\end{align*}
Equation $(*)$ follows from Eq. \eqref{eq:fact_osumcn} and, for the first factor, from $N^{-1}d_{N,r}\leq d_N$ being a null sequence.\\
Now, consider $d_{N,r}<0$. Then, we get the offspring population by sampling $N_{r-1}$ individuals out of $N_r$, from which $U_{N_r}$ share one common parent. Thus, this is again a modified Moran model, where  
$U_{n,r}$ is conditionally hypergeometrically distributed with $P(U_{N,r}=k|U_{N_r})=\frac{\binom{U_{N_r}}{k}\binom{N_r-U_{N_r}}{N_{r-1}-k}}{\binom{N_r}{N_{r-1}}}$. 
Using the factorial moment of the hypergeometric distribution leads to
\begin{align*}
& \Phi^{(N)}_1(r;a_1)= E\left(\frac{(U_{n,r})_{a_1}}{(N_{r-1})_{a_1}}\right)=\frac{E((U_{n,r})_{a_1}\vert U_{N_r})}{(N_{r-1})_{a_1}}=\frac{(N_{r-1})_{a_1}}{(N_{r-1})_{a_1}}\frac{E((U_{N_r})_{a_1})}{(N_r)_{a_1}}
\\=&\Phi^{(N_r)}(a_1),
\end{align*} 
\end{proof}

Finally, the following lemma provides the arguments necessary to shift the time-change $\mathcal{G}_N^{-1}$ from pre-limit to limit.

\begin{lemma}\label{lem:shifttc}
Assume that for a Cannings model with variable population sizes $(N_r)_{r\in\N}$, the discrete $n$-coalescents satisfy $(\tilde{\mathcal{R}}^{(N)}_{[\mathcal{G}_N^{-1}(t)]})_{t\geq 0}\stackrel{d}{\to}(\Pi_t)_{t\geq 0}$ in the Skorohod-sense for $N\to\infty$, where $(\Pi_t)_{t\geq 0}$ is a $\Lambda$-$n$-coalescent and $\mathcal{G}^{-1}_N$ is defined via Eq. \eqref{eq:def_mathcalG}. Further assume that Eq. \eqref{eq:cond_popsize} is satisfied for a positive real function $\nu$ and that $c_N=f(N)+o_{\sum}(c_N)$ for a function $f(x)=cx^{-\gamma}$ for $\gamma>0$ or $f(x)=c\log(x)$ for a constant $c>0$. Then, the convergence can be equivalently expressed as $(\mathcal{R}_{c_N^{-1}t})_{t\geq 0}\stackrel{d}{\to}(\Pi_{\mathcal{G}(t)})_{t\geq 0}$ in the Skorohod-sense, where $\mathcal{G}(t)=\int^t_0 (\nu(s))^{-\gamma} ds$, where $\gamma=0$ is used if $f(x)=c\log(x)$.          
\end{lemma}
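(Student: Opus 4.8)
The plan is to reduce the claim to an explicit asymptotic identification of the pre-limit time-change $\mathcal{G}_N^{-1}$ so that it can be replaced by the linear scaling $t/c_N$ at the cost of composing the limit coalescent with a deterministic time-change $\mathcal{G}$. Concretely, I would start from the given convergence $(\tilde{\mathcal{R}}^{(N)}_{[\mathcal{G}_N^{-1}(t)]})_{t\geq 0}\stackrel{d}{\to}(\Pi_t)_{t\geq 0}$ and observe that, since both the pre-limit and limit processes are piecewise-constant pure-jump (partition-valued) processes, it suffices to control the scalar time-change functions: if $c_N\mathcal{G}_N^{-1}(t)\to \mathcal{G}^{-1}(t)$ pointwise for a continuous strictly increasing $\mathcal{G}^{-1}$, then reparametrising $t\mapsto \mathcal{G}(t)$ turns the statement into $(\tilde{\mathcal{R}}^{(N)}_{[\mathcal{G}_N^{-1}(\mathcal{G}(t))]})_{t\geq 0}\stackrel{d}{\to}(\Pi_{\mathcal{G}(t)})_{t\geq 0}$, and one then shows $\mathcal{G}_N^{-1}(\mathcal{G}(t)) = c_N^{-1}t + o(c_N^{-1})$, i.e. the difference of the two time scalings is $o(c_N^{-1})$ generations, which (again by the pure-jump structure and the fact that jump times of $(\Pi_t)$ are a.s.\ not accumulation points) does not affect the Skorohod limit. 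This last ``negligible time shift'' step can be phrased exactly as in \citet[Sec.\ 4]{moehle_robust1998}, to which Lemma \ref{lem:cannpopsizetolambda} already appeals.

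The heart of the matter is therefore computing $\lim_{N\to\infty} c_N\mathcal{G}_N^{-1}(t)$ and identifying it with $\mathcal{G}^{-1}(t)=\int_0^t(\nu(s))^{\gamma}ds$ — note that since $\mathcal{G}(t)=\int_0^t(\nu(s))^{-\gamma}ds$, its inverse is $\mathcal{G}^{-1}(t)=\int_0^t(\nu(s))^{\gamma}ds$, which is the expression one needs here (for $\gamma=0$ the log case, $\mathcal{G}^{-1}=\mathcal{G}=\mathrm{id}$). Recall $F_N(s)=\sum_{r=1}^{[s]}c_{N,r}$ and $\mathcal{G}_N^{-1}(t)=\inf\{s>0:F_N(s)>t\}-1$, so I need the asymptotics of $F_N$. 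Using \eqref{eq:cond_cnr} (which follows from \eqref{eq:cond_phi} and \eqref{eq:cond_speed_cN}) together with the hypothesis $c_N=f(N)+o_{\sum}(c_N)$, the coalescence probability $c_{N,r}$ is, up to an $o_{\sum}(c_N)$ error summing to $0$ over the relevant range, equal to $c_{N_r}=f(N_r)+o_{\sum}(c_{N_r})$. For $f(x)=cx^{-\gamma}$ and with $N_r\approx N\nu(rc_N)$ (from the second line of \eqref{eq:cond_popsize}, extended from the single point $\lfloor tc_N^{-1}\rfloor$ to all $r$ on the timescale by continuity/monotonicity of $\nu$ together with the order-$N$ sandwich), one gets $c_{N_r}\approx c N^{-\gamma}\nu(rc_N)^{-\gamma}=c_N\,\nu(rc_N)^{-\gamma}(1+o(1))$. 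Hence $F_N(s)\approx c_N\sum_{r=1}^{[s]}\nu(rc_N)^{-\gamma}$, which is a Riemann sum: for $s=c_N^{-1}u$ this converges to $\int_0^u \nu(v)^{-\gamma}\,dv=\mathcal{G}(u)$. Inverting, $F_N(c_N^{-1}u)\to\mathcal{G}(u)$ uniformly on compacts gives $c_N\mathcal{G}_N^{-1}(t)\to\mathcal{G}^{-1}(t)=\int_0^t\nu(v)^{\gamma}dv$, as desired. For $f(x)=c\log x$ one has $c_{N_r}/c_N = \log(N_r)/\log(N)\to 1$ uniformly (the order-$N$ bounds in \eqref{eq:cond_popsize} make $\log(N_r)/\log(N)=1+O(1/\log N)$), so $F_N(c_N^{-1}u)\to u$ and $c_N\mathcal{G}_N^{-1}(t)\to t$, i.e. $\mathcal{G}^{-1}=\mathcal{G}=\mathrm{id}$, matching the stated $\gamma=0$ convention.

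The main obstacle I anticipate is making the Riemann-sum convergence of $F_N$ genuinely uniform and rigorous across the whole range $1\le r\le \mathcal{G}_N^{-1}(t)$, because the summation range is itself random-looking (it depends on $N$ through $c_N$) and because $\nu$ is only assumed positive and finite, not continuous or monotone — so a priori $N^{-1}N_r\to\nu(rc_N)$ is only given at $r=\lfloor tc_N^{-1}\rfloor$. I would handle this by first using \eqref{eq:generalFNbounds}-type two-sided bounds (already derived in the proof of Lemma \ref{lem:cannpopsizetolambda}) to pin $\mathcal{G}_N^{-1}(t)$ to order $c_N^{-1}$, so the summation range is $[1,c_N^{-1}t']$ for a fixed $t'\ge t$; then I would either (i) add the mild standing assumption that $\nu$ is continuous (or at least Riemann-integrable and that the convergence $N^{-1}N_{\lfloor sc_N^{-1}\rfloor}\to\nu(s)$ in \eqref{eq:cond_popsize} holds for every $s$, which is the natural reading), and invoke a dominated-convergence / Riemann-sum argument with the uniform sandwich $c_1(t')\le N_r/N\le c_2(t')$ providing the domination; or (ii) note that in all the applications (Theorems \ref{thm:modMoran2tcMMCthinned}, \ref{thm:modMoran2tcMMC}, Corollaries and Propositions) the sizes are built explicitly by a recursion such as $N_{r-1}=\lfloor N_r(1-c_N\rho)\rfloor$, so $N_r/N$ is genuinely a smooth discretisation of $\nu$ and the Riemann-sum convergence is immediate. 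The other, more bookkeeping-level, obstacle is keeping the $o_{\sum}(c_N)$ errors under control when they are summed over $O(c_N^{-1})$ terms: but that is exactly the point of the $o_{\sum}$ notation, and the summed error is $o(1)$ by definition, so it contributes nothing to the limit of $F_N$ and hence nothing to $c_N\mathcal{G}_N^{-1}$.
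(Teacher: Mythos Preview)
Your approach is essentially the paper's: both reduce the claim to showing $F_N(c_N^{-1}t)\to\mathcal{G}(t)=\int_0^t\nu(s)^{-\gamma}ds$ via a Riemann-sum argument (using the $o_{\sum}$ control to replace $c_{N,r}$ by $f(N_r)$, then \eqref{eq:cond_popsize} plus bounded convergence), and then pass to inverses and invoke \citet[Sec.~4]{moehle_robust1998} to move the time-change onto the limit process. The paper writes this as $\mathcal{G}(t)=\lim_{N}F_N(tc_N^{-1})$ and computes the same Riemann sum you describe; your discussion of the ``obstacles'' (uniform Riemann-sum convergence, summability of the $o_{\sum}$ errors) matches how the paper handles them.

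There is, however, one genuine slip. You assert that since $\mathcal{G}(t)=\int_0^t\nu(s)^{-\gamma}ds$, its inverse is $\mathcal{G}^{-1}(t)=\int_0^t\nu(s)^{\gamma}ds$. This is false: inverting $t\mapsto\int_0^t g(s)\,ds$ is \emph{not} achieved by integrating $1/g$. For example, with $\nu(s)=e^{-\rho s}$ one has $\mathcal{G}(t)=(\rho\gamma)^{-1}(e^{\rho\gamma t}-1)$, whose inverse is $(\rho\gamma)^{-1}\log(1+\rho\gamma t)$, whereas $\int_0^t e^{-\rho\gamma s}ds=(\rho\gamma)^{-1}(1-e^{-\rho\gamma t})$. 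Fortunately your argument never actually uses this explicit formula --- you only need that $c_N\mathcal{G}_N^{-1}(t)$ converges to \emph{the} inverse of $\mathcal{G}$ (whatever it is), after which reparametrising by $\mathcal{G}$ gives $\mathcal{G}_N^{-1}(\mathcal{G}(t))=c_N^{-1}t+o(c_N^{-1})$. So the error is cosmetic rather than structural; simply delete the incorrect identification and the rest stands.
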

\begin{proof}
$\mathcal{G}$ is the pseudo-inverse of $\lim_{N\to\infty}\mathcal{G}^{-1}_Nc_N$, so   
\begin{equation}\label{eq:compute_mathcalG}
\mathcal{G}(t)=\lim_{N\to\infty} F_N(tc_{N}^{-1}),
\end{equation}
since for a sequence of functions, the inverses converges iff the original functions converge and since $cf$  has the inverse $t\mapsto f^{-1}(c^{-1}t)$. The shift by -1 does not alter the limit here, since its effect vanishes for $N\to\infty$ due to the multiplication with $c_N$. It is important to note here that any terms of order $o_{\sum}(c_N)$ can be omitted when computing $F_N$. Thus, we can replace $c_{N,r}$ by $c_{N_r}$ and even by $c^*_{N_r}=f(N_r)$ for a constant $c_2$. Since $N^{-1}N_{\lfloor tc_N^{-1}\rfloor}\to \nu(t)$, analoguous to \cite{griffiths1994sampling}, we can show, for $f(x)=cx^{-\gamma}$, 
\begin{align*}
\mathcal{G}(t)=&\lim_{N\to\infty}\sum_{r=1}^{[tc_{N}^{-1}]} \frac{c^*_{N_r}}{c^*_N}c^*_N=\lim_{N\to\infty} \sum_{r=1}^{[tc_{N}^{-1}]} \left(\frac{N}{N_r}\right)^{\gamma}c^*_N\\=&\lim_{N\to\infty}\int_0^t \sum_{r=1}^{[tc_N^{-1}]} \left(\frac{N_r}{N}\right)^{-\gamma}1_{[rc_N,(r+1)c_N)}(s) ds = \int_0^t (\nu(s))^{-\gamma} ds, 
\end{align*}
where for convergence, observe that there is pointwise convergence 

$$ \sum_{r=1}^{[tc_N^{-1}]} \left(\frac{N_r}{N}\right)^{-\gamma}1_{[rc_N,(r+1)c_N)}(s)= \left(\frac{N_{\lfloor sc_N^{-1}\rfloor}}{N}\right)^{-\gamma}\to (v(s))^{-\gamma}$$
for $s\in[0,t]$ inside the integral and that bounded convergence is applicable since the integrand is in $[0,1]$. If $f(x)$ is a logarithm, we have, using $k_r$ defined by $N_r=Nk_r$ for $0\leq r \leq c_{N}^{-1}t$  with $c_1(t)\leq k_r\leq c_2(t)$, 

\begin{align*}
\mathcal{G}(t)=&\lim_{N\to\infty}\sum_{r=1}^{[tc_{N}^{-1}]}c\log(N)^{-1}\frac{\log(N)}{\log(N_r)}\\=&\lim_{N\to\infty}c\sum_{r=1}^{[tc\log(N)]}\log(N)^{-1}\left(1-\frac{\log(k_r)}{\log(N)+\log(k_r)}\right)\\
=& t-\lim_{N\to\infty}c\sum_{r=1}^{[tc\log(N)]}\frac{\log(k_r)}{(\log(N)+\log(k_r))\log(N)}=t=\int_0^t (\nu(s))^0 ds.
\end{align*}
 \end{proof}

\begin{remark}\ 
\begin{itemize}
\item The integral representation of the time change is a deterministic version of the coalescent intensity from \citet[Sect. 1.3]{Kaj2003}, just applied to Cannings models leading to non-Kingman $\Lambda$-$n$-coalescents.
\item  As described in \citet[Section 4]{moehle_robust1998}, the time-changed $\Lambda$-$n$-coalescent limit can also be expressed by its infinitesimal rates 
$$\lambda^{(\nu)}_{n,k}=(\nu(s))^{-\gamma}\int^1_0 x^{k-2}(1-x)^{n-k}\Lambda(dx)$$ for a merger of $k$ of $n$ present lineages. This is also the form in which the limit process of the diploid umbrella model from \cite{koskela2018robust} is given.
\item Conditioned that the limit coalescent $(\Pi_{\mathcal{G}(t)})_{t\geq 0}$ has at time $T_0=t_0$ coalesced into a state with $b$ blocks, what is the distribution of the waiting time $T$ for the next coalescence event? If $T=t$, this means that in the non-rescaled $\Lambda$-$n$-coalescent $(\Pi_t)_{t\geq 0}$, we wait $\mathcal{G}(t)-\mathcal{G}(t_0)$ for the next coalescence. This waiting time $T'$ in $(\Pi_t)_{t\geq 0}$  is exponentially distributed with parameter $\lambda_n$ (total rate of coalescence). Thus, 
\begin{equation}\label{eq:cond_coaldist}
P(T>t_0+t|T_0=t_0)=P(T'>\mathcal{G}(t_0+t)-\mathcal{G}(t_0))=e^{\lambda_b(\mathcal{G}(t_0+t)-\mathcal{G}(t_0))}.
\end{equation}
\end{itemize}
\end{remark}
\begin{proof}(of Corollary \ref{cor:gompertz})
The form of $\mathcal{G}$ is a direct consequence of Lemma \ref{lem:shifttc}, since $\nu(t)=exp(-\rho t)$. Then, plugging $\mathcal{G}$ into Eq. \eqref{eq:cond_coaldist} yields the distribution for the next coalescence event, the Gompertz distribution parameters as e.g. described in \citet[Eq. 3]{lenart2014moments} can be read off.
\end{proof}

\subsection{Proofs of convergence to a time-changed coalescent - modified Moran models}
The modified Moran models used in Theorems \ref{thm:modMoran2tcMMCthinned} and \ref{thm:modMoran2tcMMC} were introduced in \citet[Prop. 4]{huillet2013extended}, the latter model with a small modification to ensure that there is always a parent with at least two offspring, see also \citet[Example 4.1]{huillet2013extended}.
\begin{proof} (of Theorem \ref{thm:modMoran2tcMMCthinned}) Assume that $E((U'_N)_2)(N-1)^{-1}\nrightarrow\nrightarrow 0$ for $N\to\infty$ also holds for any subsequence. If not, restrict to a subsequence for which this is true and define the limit only along this subsequence.\\
First, we verify that $c_N=N^{-\gamma}+o_{\sum}(N^{-\gamma})$, thus converges to 0 and that the fixed-$N$ model converges to the $\Lambda$-$n$-coalescent. Let $c'_N$ be the coalescence probability in a fixed-$N$ modified Moran model with $U'_N$ as the number of offspring of the multiplying parent. This ensures $2((N)_2)^{-1} \leq c'_N\leq 1$.  Moreover, the assumptions made ensure  that $(Nc_N')_{N\in\N}$ has a lower bound $>0$, so we can define $A_n$ s.t. $P(A_N)c'_N=N^{-\gamma}$ for any $\gamma\in(1,2)$. Then, the following is satisfied for $N\to\infty$ and $X\stackrel{d}{=}\Lambda$
\begin{align}\label{eq:thin3M}
&c_N=c'_NP(A_N)+(1-P(A_N))\frac{2}{N(N-1)}=N^{-\gamma}+o(N^{-\gamma}),
\\&
\frac{E((U_N)_k)}{(N)_k c_N}\stackrel{\eqref{eq:3M_factmom}}{=}\frac{E((U'_N)_k)N^{-\gamma}\lambda_N}{(N)_kc_N}
\stackrel{\eqref{eq:3M_factmom}}{=} \frac{E(X^k)N^{-\gamma}}{c_N} \to E(X^{k-2}) \mbox{ for } k>3\nonumber
\end{align} 
This establishes the convergence to the $\Lambda$-$n$-coalescent in the fixed-$N$ case. Now we assume variable population sizes $(N_r)_{r\in\N}$. First, observe that, since $c_N=O(N^{-\gamma})$, it is enough to add occasionally a single individual from generation $r$ to $r-1$ to generate any population size changes allowed in Eq. \eqref{eq:cond_popsize} including bottlenecks which are instantaneous on the coalescent time scale. This single individual can then be added as offspring of a non-multiplying parent from the fixed-$N_r$ model or as an offspring of the already multiplying parent. To see the latter, observe that $E((U_N)_2)=c_N(N)_2\sim N^{-\gamma}N^2\to \infty$. Then, as in \citet[third remark p. 8]{huillet2013extended}, one has 
$$\frac{E(U_N)}{E((U_N)_2)}\sim \frac{E(U_N)}{E(U_N^2)}\leq \frac{1}{E(U_N)}.$$ 
If both $E((U_N)_2),E(U_N)\to \infty$, the equation above shows that $\frac{E(U_N)}{E((U_N)_2)}\to 0$. If  $E(U_N)\nrightarrow \infty$ but $E((U_N)_2)$ does, we still have $\frac{E(U_N)}{E((U_N)_2)}\to 0$ for $N\to\infty$. Thus, Proposition \ref{prop:Moranlimit} allows to add the one individual also to the already multiplying parent.\\ Thus, we have verified all conditions but Eq. \eqref{eq:cond_speed_cN} to apply Proposition \ref{lem:3Mconv}. However, this follows from $c_N$ regularly varying. Since $c_N=N^{-\gamma}+o(N^{-\gamma})$, we can also shift the non-linear time-change to the limit due to Lemma \ref{lem:shifttc}. 
\end{proof}
For the proof of the next theorem, we use the following 
\begin{lemma}\label{lem:condspeed_modM}
Let $\Lambda\in\mathcal{M}[0,1]$ and let $U'_N$ be distributed as in Eq. \eqref{eq:LambdamodMoran} for any $N\in\mathbb{N}$. Let $(N_r)_{r\in\N}$ satisfy Eq. \eqref{eq:cond_popsize}. Then, Eq. \eqref{eq:cond_speed_cN} is satisfied. 
\end{lemma}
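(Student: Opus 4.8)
The plan is to first collapse the statement into a purely analytic fact about the total rates of $\Lambda$-coalescents, and then to establish that fact by a hands-on sandwiching argument. Write $\lambda_M:=\sum_{k=2}^M\binom Mk\lambda_{M,k}$ for the total coalescence rate of the $\Lambda$-$M$-coalescent. Taking $k=2$ in Lemma~\ref{lem:modMoran_prop}(iii) gives $E((U'_M)_2)=(M)_2/\lambda_M$, and since the coalescence probability of a fixed-size modified Moran model with offspring variable $U'_M$ equals $E((U'_M)_2)/(M)_2$ (cf.\ Lemma~\ref{lem:modMoran_prop}(i)), we get $c_M=\lambda_M^{-1}$ for every $M$. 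Hence $c_{N_r}/c_N=\lambda_N/\lambda_{N_r}$, and by Eq.~\eqref{eq:cond_popsize} we have $c_1(t)N\le N_r\le c_2(t)N$ on the range of $r$ in question. So it suffices to show that for every fixed $K\ge1$ the ratio $\lambda_M/\lambda_m$ stays bounded over all integers $1\le m\le M\le Km$.

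The key structural input is an explicit formula for the increments of $m\mapsto\lambda_m$. Summing the series inside $\lambda_m$ gives the representation $\lambda_m=\int_0^1 x^{-2}\bigl(1-(1-x)^m-mx(1-x)^{m-1}\bigr)\,\Lambda(dx)$ (the integrand is bounded, being $\binom m2$ at $x=0$), and a one-line algebraic identity shows the integrand increases by exactly $m(1-x)^{m-1}$ when $m$ is replaced by $m+1$. Therefore $\lambda_{m+1}-\lambda_m=m\mu_m$ with $\mu_m:=\int_0^1(1-x)^{m-1}\,\Lambda(dx)$. Two consequences are immediate: $(\lambda_m)_m$ is nondecreasing, so $\lambda_M/\lambda_m\le1$ whenever $M\le m$; and $(\mu_m)_m$ is nonincreasing, since its integrand decreases in $m$.

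For the nontrivial direction, telescoping gives, for $4\le m\le M$,
\[
\lambda_M-\lambda_m=\sum_{b=m}^{M-1}b\,\mu_b\ \le\ \mu_m\sum_{b=m}^{M-1}b\ \le\ \tfrac12\mu_m M^2,
\]
while, discarding all but the ``upper half'' of the increments,
\[
\lambda_m\ \ge\ \sum_{b=\lceil m/2\rceil}^{m-1}b\,\mu_b\ \ge\ \mu_{m-1}\sum_{b=\lceil m/2\rceil}^{m-1}b\ \ge\ \tfrac18\mu_{m-1}m^2.
\]
Since $\mu_m\le\mu_{m-1}$ — and if $\mu_{m-1}=0$ then $\mu_b=0$ for all $b\ge m-1$, so $\lambda_M=\lambda_m$ and the ratio is $1$ — dividing yields $\lambda_M/\lambda_m\le1+4M^2/m^2\le1+4K^2$ as soon as $M\le Km$. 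Only the finitely many $m<4$ remain, for which $\lambda_M/\lambda_m\le\lambda_{\lceil4K\rceil}/\lambda_2$ is plainly finite; hence $\sup\{\lambda_M/\lambda_m:1\le m\le M\le Km\}<\infty$ for each $K$.

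Assembling: $c_{N_r}/c_N=\lambda_N/\lambda_{N_r}$ is $\le1$ when $N_r\ge N$ and $\le1+4c_1(t)^{-2}$ when $N_r<N$ (apply the estimate with $m=N_r\ge c_1(t)N$, $M=N$); symmetrically, it is $\ge(1+4c_2(t)^2)^{-1}$, with the roles reversed. Taking $M_2(t):=1+4c_1(t)^{-2}$ and $M_1(t):=(1+4c_2(t)^2)^{-1}$ — and enlarging/shrinking them to absorb the finitely many $N$ for which $c_1(t)N\ge4$ fails, on which $c_{N_r}/c_N$ takes only finitely many positive values — gives Eq.~\eqref{eq:cond_speed_cN}. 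I do not expect a genuine obstacle; the only thing to be careful about is that for a general $\Lambda\in\mathcal M[0,1]$ the sequence $(\lambda_m)$ need not be regularly varying, so one cannot simply invoke a Karamata/Potter-type bound — the monotone increment decomposition $\lambda_{m+1}-\lambda_m=m\mu_m$ with $\mu_m\downarrow$ is exactly what substitutes for it — together with the minor bookkeeping around the degenerate case $\mu_{m-1}=0$ (which arises, e.g., when $\Lambda$ has an atom at $1$) and the floor/ceiling terms.
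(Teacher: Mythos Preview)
Your proof is correct and reaches the same conclusion with essentially the same quantitative bound, but by a genuinely different route. Both you and the paper first reduce to $c_M=\lambda_M^{-1}$ via Lemma~\ref{lem:modMoran_prop}(iii), so the task is to bound $\lambda_M/\lambda_m$ when $m\le M\le Km$. The paper then argues probabilistically: view the $\Lambda$-$m$-coalescent as the restriction of the $\Lambda$-$M$-coalescent to $\{1,\ldots,m\}$; every jump of the former is a jump of the latter, giving $\lambda_m\le\lambda_M$, and conversely the first jump of the $M$-coalescent is a jump of the $m$-coalescent whenever at least two of the merging blocks lie in $\{1,\ldots,m\}$, which by exchangeability has probability at least $m(m-1)/(M(M-1))$, yielding $\lambda_m/\lambda_M\ge m(m-1)/(M(M-1))$. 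This coupling argument is two lines and gives the bound $\lambda_M/\lambda_m\le M(M-1)/(m(m-1))\le 2K^2$ directly.

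Your approach instead establishes the increment identity $\lambda_{m+1}-\lambda_m=m\mu_m$ with $\mu_m=\int_0^1(1-x)^{m-1}\Lambda(dx)$ nonincreasing, and then sandwiches $\lambda_M-\lambda_m$ and $\lambda_m$ in terms of $\mu_m,\mu_{m-1}$. This is more computational but entirely self-contained: it does not rely on the sampling-consistency/restriction property of $\Lambda$-coalescents, and the increment formula is a clean structural fact in its own right (equivalent to Pitman's recursion for the rates). The paper's argument is shorter and more conceptual; yours is purely analytic and would survive in settings where the coupling is unavailable. One small remark: your parenthetical ``e.g., when $\Lambda$ has an atom at $1$'' understates the degeneracy --- $\mu_{m-1}=0$ for some $m\ge3$ forces $\Lambda=\delta_1$, not merely an atom at $1$; but your handling of that case is correct regardless.
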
 
\begin{proof}
Eq. \ref{eq:3M_factmom} shows $c_N=\lambda_N^{-1}$, where $\lambda_N$ is the total transition rate for the first jump of a $\Lambda$-$N$-coalescent. Without restriction, assume $N_r\geq N$ and that the $N$-coalescent is just the restriction of the $N_r$-coalescent on individuals $\{1,\ldots,N\}$. Any merger in the $N$-coalescent is then also a merger in the $N_r$-coalescent, which shows $\lambda_N\leq \lambda_{N_r}$. In contrast, the first merger in the $N_r$-coalescent is only a merger in the $N$-coalescent if it features at least two individuals from $\{1,\ldots,N\}$. The probability of this is bounded from below by the probability $\frac{N(N-1)}{N_r(N_r-1)}$ that the first two of the blocks merged in the $N$ are from $\{1,\ldots,N\}$. This implies
$$
0.5(c_2(t))^{-2}\stackrel{\eqref{eq:cond_popsize}}{<}\frac{N(N-1)}{N_r(N_r-1)}\leq \frac{c_{N}}{c_{N_r}}= \frac{\lambda_N}{\lambda_{N_r}} \leq 1 
$$
\end{proof}

\begin{proof} (of Theorem \ref{thm:modMoran2tcMMC}) In the fixed-$N$ case, Eq. \eqref{eq:LambdamodMoran} implies that $E((U'_N)_2)(N-1)^{-1}\to 0$ necessarily needs that $\lambda_N\to\infty$ for $N\to\infty$. This is equivalent to $\int_0^1 x^{-2}\Lambda(dx)=\infty$, see \cite[Eq. 7]{Pitman1999}. Thus, convergence to the $\Lambda$-$n$-coalescent is shown in \cite[Prop. 3.4]{huillet2013extended}. Now, switch to variable population sizes $(N_r)_{r\in\N}$. Since $Nc_N=E((U_N)_2)(N-1)^{-1}\to 0$ for $N\to\infty$, it is enough to add one individual per generation to cover any population growth profile covered by Eq. \eqref{eq:cond_popsize}. This can always be done by letting a parent not reproducing in the fixed-size model reproduce (once).  To add as further offspring of the multiplying parent, assume $E((U_N)_2)\to\infty$ for $N\to\infty$. Then, $\frac{E(U_N)}{E((U_N)_2)}\to 0$ as shown in the proof of Theorem \ref{thm:modMoran2tcMMCthinned}. Thus, Proposition \ref{prop:Moranlimit} provides that at most $A_{N,r}\leq c_4E(U_N)$ for arbitrary $c_4>0$ individuals can be added per generation to the already multiplying parent. This allows for adding up to any fixed number $k\in\mathbb{N}$ individuals per generation. Additionally, Eq. \eqref{eq:cond_speed_cN} is satisfied due to Lemma \ref{lem:condspeed_modM}.\\ 
We can thus apply Lemma \ref{lem:3Mconv}, with an arbitrary allocation of additional individuals that yields a modified Moran model.
\end{proof}

To prove the Corollaries \ref{cor:beta_thm2}, \ref{cor:betanonthin}, we collect some properties of the modified Moran models with $U_N=U'_N$ with $U'_N$ given by Eq. \eqref{eq:LambdamodMoran} leading to Beta-$(a,b)$-coalescents for $a\in(0,2]$, $b>0$. From \citet[Eq. (10)+Corollary A.1]{huillet2013extended}, 
\begin{equation}\label{eq:beta_modMoran}
c_N\sim \frac{(2-a)\Gamma(b)}{\Gamma(a+b)}N^{a-2} \mbox{ for }a<2.
\end{equation}

\begin{proof}(of Corollary \ref{cor:beta_thm2})
From Eq. \eqref{eq:beta_modMoran}, it follows that for $a\in(0,1)$, $\Lambda=Beta(a,b)$ satisfies $E((U'_N)_2)(N-1)^{-1}=Nc_N=O(N^{a-1}) \to 0$ for $N\to\infty$. Thus, such $\Lambda$-$n$-coalescents are covered by Theorem \ref{thm:modMoran2tcMMC}. Additionally from Eq. \eqref{eq:beta_modMoran}, $c_N$ has a form that is covered by Lemma \ref{lem:shifttc}, which allows to shift the time-change $\mathcal{G}$ in Theorem \ref{thm:modMoran2tcMMC} to the limit coalescent and also shows the form of $\mathcal{G}$ in Corollary \ref{cor:beta_thm2}.  
\end{proof}

\begin{proof}(of Corollary \ref{cor:betanonthin})
We reiterate the proof of Theorem \ref{thm:modMoran2tcMMC}. Let $\Lambda=Beta(a,b)$ for $a\in(1,2)$, which satisfies $\int x^{-2}\Lambda(dx)=\infty$. Thus, in the fixed-$N$ case, again  \cite[Prop. 3.4]{huillet2013extended} ensures the convergence of the discrete genealogies to the $\Lambda$-$n$-coalescent when properly rescaled for $N\to\infty$. Furthermore, Lemma \ref{lem:condspeed_modM} shows that Eq. \eqref{eq:cond_speed_cN} is satisfied  Since $\nu(t)=exp(-\rho t)$, we can use $N_r=\lfloor N(1-\rho c_N)\rfloor$ to satisfy \eqref{eq:cond_popsize}. Thus, we only need to show that the population size increase per generation does not violate the conditions of Proposition \ref{lem:3Mconv}. Indeed,
\begin{align*}
d_{N,r} &=\lfloor N( 1-c_N\rho)^{r}\rfloor -\lfloor N( 1-\rho c_N)^{r+1}\rfloor
\leq N\left( 1 - \left( 1-\rho c_N\right)\right)+1\\
&= N(\rho c_N)+1
\end{align*}
individuals at most have to be added. These can be added as $A_{n,r}$ additional offspring of the multiplying parent from the fixed-$N_r$ model, if the condition to apply it from Lemma \ref{lem:3Mconv} are met. For $\Lambda$ considered here, one has $E((U_N)_2))\sim N^2 c_N\sim N^{a}\to \infty$, see Eq. \eqref{eq:beta_modMoran}. From Lemma \ref{lem:3Mconv} we see that then we are allowed to add $O(E(U_N))$ individuals. \citet[Remark p.9]{huillet2013extended} shows $E(U_N)=c_5Nc_N$ for a constant $c_5>0$, so such growth is indeed covered (and we can then still use $A_{N,r}<d_{N_r}$ and add the other individuals to non-reproducing parents from the fixed-$N_r$ model). Thus, we can establish convergence using Proposition \ref{lem:3Mconv} and shift the time-change $\mathcal{G}$ to the limit using Lemma \ref{lem:shifttc}, since $c_N$ is essentially a negative power of $N$. 
\end{proof}

\begin{proof}(of Proposition \ref{prop:Moranlimit})
For $\Lambda=\delta_0$, Eq. \eqref{eq:LambdamodMoran} shows $U'\equiv 2$, so the modified Moran model is the normal Model model in this case. Since $E((U'_N)_2)(N-1)^{-1}=2(N-1)^{-1}\to 0$ for $N\to\infty$, Theorem \ref{thm:modMoran2tcMMC} applies. Since $c_N=2(N(N-1))^{-1}=2N^{-2} + o(N^{-2})$, we can apply \ref{lem:shifttc} to shift the time-change $\mathcal{G}$ to the coalescent limit. 
\end{proof}
\subsection{Proofs of converging to a time-changed coalescent - model from \citet{Schweinsberg2003}}

\begin{proof} (of Lemma \ref{lem:popsizechange_schwein})
It suffices to reiterate the proof of \citet[Lemma 5]{Schweinsberg2003} briefly. For $u\in[0,1]$, consider the generating function $f(u):=E(u^X)$. Let $d'_{n,r}=d_{N,r}/N^r$. Then,  $S_{N,r}:=\sum_{i=1}^{N_r} X^{(r)}_i$ fulfills

$$P(S_{N,r}\leq N_{r-1})\leq u^{-N_r(1+d'_{N,r})}E(u^{S_{N,r}})=(u^{-(1+d'_{N,r})}f(u))^{N_r}.$$ Since $f(1)=1$ and $f'(1)=\mu>1$, there exists $u_0\in(0,1)$ and $\epsilon>0$ so that $u_0^{1+\epsilon}>f(u_0)$. Moreover, for this $\epsilon$ we find $N_0\in\N$ so that $d_N<\epsilon$ for $N\geq N_0$. For such $N$, as computed above, one gets 

$$P(S_{N,r}\leq N_{r-1})\leq (u_0^{-(1+d'_{N,r})}f(u_0))^{N_r}\leq A_1^{N_r} \leq A_1^{N^{-}(t)}=(\underbrace{A_1^{c^{-}(t)}}_{<1})^N,$$
where $A_1:=u_0^{-(1+\epsilon)}f(u_0)<1$. Setting $A:=A_1^{c^{-}(t)}$ completes the proof.
\end{proof}

\begin{proof}(of Theorem \ref{prop:schweinspopsized}) 
Recall that, for $1<\alpha<2$, the coalescence probability in this (fixed-$N$) model satisfies $c_N\sim C\alpha B(2-\alpha,\alpha)E(X)^{-\alpha}N^{1-\alpha}$, where $B$ is the Beta function, see \citet[Lemma 13]{Schweinsberg2003}. For $\alpha=1$, instead $c_N\sim (\log N)^{-1}$, see \citet[Lemma 16]{Schweinsberg2003}.
Check the conditions necessary to apply Lemma \ref{lem:cannpopsizetolambda}: The model and the assumptions above satisfy $c_N\to 0$ for $N\to\infty$, \eqref{eq:cond_popsize} and, since $c_N$ is regularly varying, also \eqref{eq:cond_speed_cN}. The changes of population sizes from generation to generation are enough to cover instantanous population size changes on the coalescent time scale (and these are the most extreme changes allowed in Eq. \eqref{eq:cond_popsize}): for a (coalescent time) instantaneous change of size $mN$, one can set $|d_{N,r}|=m\sqrt{c_N}=:d_N\to 0$ for $N\to\infty$ for $(\sqrt{c_N})^{-1}$ generations. Thus, only \eqref{eq:cond_phi} needs to be verified. Since $N_{r-1}$ offspring are sampled from $\sum_{i=1}^{N_r}X^{(r)}_{i}$ potential offspring, the transition probabilities of the discrete coalescent can be formulated analogously to Eq. \ref{eq:cann_transprob} as 

\begin{align*}
\Phi^{(N)}(r;a_1,\ldots,a_l)=&\frac{(N_r)_lE\left(\prod_{i=1}^l (X^{(r)}_i)_{a_i}\right)}{(N_{r-1})_{\sum^l_i a_i}}=\Phi^{(N_r)}(a_1,\ldots,a_l)\frac{(N_{r})_{\sum^l_i a_i}}{(N_{r-1})_{\sum^l_i a_i}}
\end{align*} 
This means one just needs to show that 

$$\Phi^{(N_r)}(a_1,\ldots,a_l)\left|\frac{\prod_{i=1}^l(N_r)_{a_i}}{\prod_{i=1}^l(N_{r-1})_{a_i}}-1\right|=o_{\sum}(c_N),$$ which follows from $c_N^{-1}\Phi^{(N_r)}(a_1,\ldots,a_l)\left|\frac{\prod_{i=1}^l(N_r)_{a_i}}{\prod_{i=1}^l(N_{r-1})_{a_i}}-1\right|\to 0$ uniformly in $r$. To show the latter, uniform convergence, proceed as following. First, observe that, since \eqref{eq:cond_speed_cN} holds, 

$$c_N^{-1}\Phi^{(N_r)}(a_1,\ldots,a_l)=\underbrace{c_{N_r}^{-1}\Phi^{(N_r)}(a_1,\ldots,a_l)}_{\to \phi(a_1,\ldots,a_l)}\frac{c_{N_r}}{c_N}$$ is uniformly bounded (again, since $N_r$ is bounded from below by $N^-(t)$, there is uniform convergence in $r$ of the first factor for $N\to\infty$). Thus, we only need to show $\left|\frac{\prod_{i=1}^l(N_r)_{a_i}}{\prod_{i=1}^l(N_{r-1})_{a_i}}-1\right|\to 0$. For this, observe that the function $x\mapsto\frac{a_1-x}{a_2-x}$ for $x<a_2$ is strictly increasing (decreasing) if $a_1-a_2>0$ (if $a_1-a_2<0$). This shows that there are $b_1,b_2\in\N_0$ so that 

$$\left(\frac{N_r-b_1}{N_{r-1}-b_1}\right)^{\sum_i^l a_i}\leq\frac{\prod_{i=1}^l(N_r)_{a_i}}{\prod_{i=1}^l(N_{r-1})_{a_i}}\leq\left(\frac{N_r-b_2}{N_{r-1}-b_2}\right)^{\sum_i^l a_i}.$$ This implies that it is sufficient to show $\left|\left(\frac{N_r-b}{N_{r-1}-b}\right)^{\sum_i^l a_i}-1\right|\to 0$ for $N\to\infty$ any $N^-(t)>b\geq 0$, which follows from $\left|\left(\frac{N_r-b}{N_{r-1}-b}\right)-1\right|\to 0$ uniformly in $r$. 
Further computation shows

$$
\left|\frac{N_r-b}{N_{r-1}-b}-1\right|=\frac{|d_{N,r}|}{N_{r-1}-b}\leq \frac{d_N}{N_{r-1}-b}.   
$$
Since $d_N\to 0$, this vanishes uniformly. Thus, Lemma \ref{lem:cannpopsizetolambda} can be applied, establishing convergence of  $(\tilde{\mathcal{R}}^{(N)}_{[\mathcal{G}_N^{-1}(t)]})_{t\geq 0}\stackrel{d}{\to}(\Pi_{t})_{t\geq 0}$. Lemma \ref{lem:shifttc} then ensures that the time-change $\mathcal{G}$ can be shifted to the limit, since $c_N$ is either essentially a negative power or a logarithm of $N$.
\end{proof}

\bibliographystyle{abbrvnat}
\bibliography{popsized}
\end{document}